\newtheorem{remark}[theorem]{Remark}
\newtheorem{example}{Example}
\newcommand{\R}{\mathbb{R}}
\newcommand{\N}{\mathbb{N}}
\newcommand{\opab}{{^CD}^{\alpha,\beta}_{\gamma}}
\newcommand{\opabp}{^CD^{{\alpha}_{1},{\beta}_{1}}_{{\gamma}_{1}}}
\newcommand{\opabi}{^CD^{\alpha_{i-1},\beta_{i-1}}_{\gamma_{i-1}}}
\newcommand{\opabN}{^CD^{\alpha_N,\beta_N}_{\gamma_N}}
\newcommand{\opbac}{D^{\beta,\alpha}_{1-\gamma}}
\newcommand{\opbaci}{D^{\beta_{i-1},\alpha_{i-1}}_{1-\gamma_{i-1}}}
\newcommand{\caprb}{{^C_xD_b^\beta}}
\newcommand{\capla}{{^C_aD_x^\alpha}}
\newcommand{\intlb}{{_aI_x^{1-\beta}}}
\newcommand{\intlbi}{{_aI_x^{1-\beta_{i-1}}}}
\newcommand{\intra}{{_xI_b^{1-\alpha}}}
\newcommand{\intrai}{{_xI_b^{1-\alpha_{i-1}}}}
\begin{document}

\myref

\title{Multiobjective fractional variational calculus\\
in terms of a combined Caputo derivative\thanks{This work
was partially supported by the \emph{Portuguese Foundation for
Science and Technology} through the R\&D unit
\emph{Center for Research and Development in Mathematics and Applications}.}}

\author{Agnieszka B. Malinowska\thanks{Faculty of Computer Science, 
Bia{\l}ystok University of Technology, 15-351 Bia\l ystok, Poland 
({\tt a.malinowska@pb.edu.pl}). 
Partially supported by BUT Grant S/WI/2/2011.} 
\and Delfim F. M. Torres\thanks{Department of
Mathematics, University of Aveiro, 3810-193 Aveiro, Portugal 
({\tt delfim@ua.pt}). Partially supported by FCT (Portugal) 
through the project UTAustin/MAT/0057/2008.}}

\maketitle


\begin{abstract}
The study of fractional variational problems in terms
of a combined fractional Caputo derivative is introduced.
Necessary optimality conditions of Euler--Lagrange type
for the basic, isoperimetric, and Lagrange variational problems
are proved, as well as transversality and sufficient optimality conditions.
This allows to obtain necessary and sufficient Pareto optimality conditions
for multiobjective fractional variational problems.
\end{abstract}


\begin{keywords}
Variational analysis,
multiobjective optimization,
fractional Euler--Lagrange equations,
fractional derivatives in the sense of Caputo,
Pareto minimizers.
\end{keywords}


\begin{AMS}
49K05, 26A33.
\end{AMS}


\pagestyle{myheadings}
\thispagestyle{plain}
\markboth{A. B. MALINOWSKA AND D. F. M. TORRES}{MULTIOBJECTIVE FRACTIONAL VARIATIONAL CALCULUS}


\section{Introduction}

There is an increasing interest in the study of dynamic systems of fractional
(where ``fractional'' actually means ``non-integer'') order.
Extending derivatives and integrals from integer to non-integer order
has a firm and longstanding theoretical foundation.
Leibniz mentioned this concept in a letter to L'Hopital
over three hundred years ago. Following L'Hopital's and Leibniz's
first inquisition, fractional calculus was primarily a study reserved to
the best minds in mathematics. Fourier, Euler, and Laplace are among the
many that contributed to the development of fractional calculus.
Along the history, many found, using their own notation and methodology,
definitions that fit the concept of a non-integer order integral or
derivative. The most famous of these definitions among mathematicians,
that have been popularized in the literature of fractional calculus, are the ones of
Riemann--Liouville and Grunwald--Letnikov. On the other hand,
the most intriguing and useful applications of fractional
derivatives and integrals in engineering and science have been found
in the past one hundred years.
In some cases, the mathematical notions evolved in order
to better meet the requirements of physical reality.
The best example of this is the Caputo fractional derivative,
nowadays the most popular fractional operator among engineers and applied scientists,
obtained by reformulating the ``classical'' definition of
Riemann--Liouville fractional derivative in order to be possible
to solve fractional initial value problems
with standard initial conditions \cite{Dot:Porto}.
Particularly in the last decade of the XX century,
numerous applications and physical manifestations
of fractional calculus have been found.
Fractional differentiation is nowadays recognized as a good tool
in various different fields: physics, signal processing, fluid mechanics,
viscoelasticity, mathematical biology, electrochemistry, chemistry,
economics, engineering, and control theory (see, \textrm{e.g.},
\cite{debnath,Diethelm,ferreira,hilfer,hilfer2,kulish,magin,metzler,Oustaloup,TM,Zas}).

The fractional calculus of variations was born
in 1996 with the work of Riewe \cite{rie,rie97},
and is nowadays a subject under strong current research (see
\cite{Almeida2,MyID:154,MyID:152,MyID:179,El-Nabulsi:Torres07,%
R:T:08,Frederico:Torres07,Frederico:Torres08,G:D:10,MyID:181} and
references therein). The fractional calculus of variations extends
the classical variational calculus by considering fractional
derivatives into the variational integrals to be extremized. This
occurs naturally in many problems of physics and mechanics, in order
to provide more accurate models of physical phenomena (see,
\textrm{e.g.}, \cite{R:A:D:10,Atanackovic}). The aims of this paper
are twofold. Firstly, we extend the notion of Caputo fractional
derivative to the fractional derivative $\opab$, which is a convex
combination of the left Caputo fractional derivative of order
$\alpha$ and the right Caputo fractional derivative of order
$\beta$. This idea goes back at least as far as \cite{klimek}, where
based on the Riemann--Liouville fractional derivatives, the
symmetric fractional derivative was introduced. Klimek's approach
\cite{klimek} is obtained in our framework as a particular case, by
choosing parameter $\gamma$ to be $1/2$. Although the symmetric
fractional derivative of Riemann--Liouville introduced by Klimek is
a useful tool in the description of some nonconservative models,
this type of differentiation does not seems suitable for all kinds
of variational problems. Indeed, the hypothesis that admissible
trajectories $y$ have continuous symmetric fractional derivatives
implies that $y(a)=y(b)=0$ (\textrm{cf.} \cite{Ross}). Therefore,
the advantage of the fractional Caputo-type derivative $\opab$ here
introduced lies in the fact that using this derivative we can
describe a more general class of variational problems. It is also
worth pointing out that the fractional derivative $\opab$ allows to
generalize the results presented in \cite{AlmeidaTorres}. Our second
aim is to introduce the subject of multiobjective fractional
variational problems. This seems to be a completely open area of
research, never considered before in the literature. Knowing the
importance and relevance of multiobjective problems of the calculus
of variations in physics, engineering, and economics (see
\cite{Engwerda,BasiaDelfim,MyID:093,BorisBookII,Sophos,Wang} and the
references given there), and the usefulness of fractional
variational problems, we trust that the results now obtained will
open interesting possibilities for future research. Main results of
the the paper provide methods for identifying Pareto optimal
solutions. Necessary and sufficient Pareto optimality conditions are
obtained by converting a multiobjective fractional variational
problem into a single or a family of single fractional variational
problems with an auxiliary scalar functional, possibly depending on
some parameters.

The paper is organized as follows. Section~\ref{sec2} presents some
preliminaries on fractional calculus, essentially to fix notations.
In Section~\ref{main:1} we introduce the fractional
derivative $\opab$ and provide the necessary concepts and results
needed in the sequel. Our main results are stated and proved in
Section~\ref{ssec:pro} and Section~\ref{main:2}. The fractional
variational problems under our consideration are formulated in terms
of the fractional derivative $\opab$. We discuss the fundamental
concepts of a variational calculus such as the the Euler--Lagrange
equations for the elementary (Subsection~\ref{ssec:EL}),
isoperimetric (Subsection~\ref{sec:iso}), and Lagrange
(Subsection~\ref{sec:lagr}) problems, as well as sufficient optimality
(Subsection~\ref{ssec:suf}) and transversality
(Subsection~\ref{sec:tran}) conditions. Section~\ref{main:2} deals with
the multiobjective fractional variational calculus. We present Pareto
optimality conditions (Subsection~\ref{sec:par:op}) and examples
illustrating our results (Subsection~\ref{sec:ex}).


\section{Fractional calculus}
\label{sec2}

In this section we review the necessary definitions and facts from
fractional calculus. For more on the subject we refer the reader to
the books \cite{kilbas,Oldham,Podlubny,samko}.
Let $f\in L_1([a,b])$  and $0<\alpha<1$. We begin by defining the left and
the right Riemann--Liouville Fractional Integrals (RLFI) of order
$\alpha$ of a function $f$. The left RLFI is given by
\begin{equation}
\label{RLFI1}
{_aI_x^\alpha}f(x):=\frac{1}{\Gamma(\alpha)}\int_a^x
(x-t)^{\alpha-1}f(t)dt,\quad x\in[a,b],
\end{equation}
and the right RLFI by
\begin{equation}
\label{RLFI2}
{_xI_b^\alpha}f(x):=\frac{1}{\Gamma(\alpha)}\int_x^b(t-x)^{\alpha-1}
f(t)dt,\quad x\in[a,b],
\end{equation}
where $\Gamma(\cdot)$ represents the Gamma function, \textrm{i.e.},
$$
\Gamma(z):=\int_0^\infty t^{z-1} \mathrm{e}^{-t}\, dt.
$$
Moreover, ${_aI_x^0}f={_xI_b^0}f=f$ if $f$ is a continuous function.
The left and the right Riemann--Liouville derivatives are defined
with the help of the respective fractional integrals.
The left Riemann--Liouville Fractional Derivative (RLFD) is given by
\begin{equation}
\label{RLFD1}
{_aD_x^\alpha}f(x):=\frac{1}{\Gamma(1-\alpha)}\frac{d}{dx}\int_a^x
(x-t)^{-\alpha}f(t)dt=\frac{d}{dx}{_aI_x^{1-\alpha}}f(x),\quad
x\in[a,b],
\end{equation}
and the right RLFD by
\begin{equation}
\label{RLFD2}
{_xD_b^\alpha}f(x):=\frac{-1}{\Gamma(1-\alpha)}\frac{d}{dx}\int_x^b
(t-x)^{-\alpha}
f(t)dt=\left(-\frac{d}{dx}\right){_xI_b^{1-\alpha}}f(x),\quad
x\in[a,b].
\end{equation}
Let $f\in AC([a,b])$, where $AC([a,b])$ represents the space of
absolutely continuous functions on $[a,b]$. Then the Caputo fractional
derivatives are defined as follows:
the left Caputo Fractional Derivative (CFD) by
\begin{equation}
\label{CFD1}
{^C_aD_x^\alpha}f(x):=\frac{1}{\Gamma(1-\alpha)}\int_a^x
(x-t)^{-\alpha}\frac{d}{dt}f(t)dt={_aI_x^{1-\alpha}}\frac{d}{dx}f(x),
\quad x\in[a,b],
\end{equation}
and the right CFD by
\begin{equation}
\label{CFD2}
{^C_xD_b^\alpha}f(x):=\frac{-1}{\Gamma(1-\alpha)}\int_x^b
(t-x)^{-\alpha}
\frac{d}{dt}f(t)dt={_xI_b^{1-\alpha}}\left(-\frac{d}{dx}\right)f(x),
\quad x\in[a,b],
\end{equation}
where $\alpha$ is the order of the derivative.
The operators \eqref{RLFI1}--\eqref{CFD2} are obviously linear. We
now present the rule of fractional integration by parts for RLFI
(see, \textrm{e.g.}, \cite{int:partsRef}). Let $0<\alpha<1$, $p\geq1$,
$q \geq 1$, and $1/p+1/q\leq1+\alpha$. If $g\in L_p([a,b])$ and
$f\in L_q([a,b])$, then
\begin{equation}
\label{ipi} \int_{a}^{b} g(x){_aI_x^\alpha}f(x)dx =\int_a^b f(x){_x
I_b^\alpha} g(x)dx.
\end{equation}
In the discussion to follow, we will also need the following
formulae for fractional integrations by parts:
\begin{equation}
\label{ip}
\begin{split}
\int_{a}^{b}  g(x) \, {^C_aD_x^\alpha}f(x)dx &=\left.f(x){_x
I_b^{1-\alpha}} g(x)\right|^{x=b}_{x=a}+\int_a^b f(x){_x D_b^\alpha}
g(x)dx,\\
\int_{a}^{b}  g(x) \, {^C_xD_b^\alpha}f(x)dx &=\left.-f(x){_a
I_x^{1-\alpha}} g(x)\right|^{x=b}_{x=a}+\int_a^b f(x){_a D_x^\alpha}
g(x)dx.
\end{split}
\end{equation}
They can be derived using equations \eqref{RLFD1}--\eqref{CFD2},
the identity \eqref{ipi}, and performing integration by parts.


\section{The fractional operator $\mathbf{\opab}$}
\label{main:1}

Let  $\alpha, \beta \in(0,1)$ and $\gamma\in [0,1]$ . We define the
fractional derivative operator $\opab$ by
\begin{equation}
\label{op}
\opab :=\gamma \, \capla + (1-\gamma) \, \caprb \, ,
\end{equation}
which acts on $f\in AC([a,b])$ in the expected way:
\begin{equation*}
\opab f(x)=\gamma \capla f(x) + (1-\gamma) \caprb f(x).
\end{equation*}
Note that ${^CD^{\alpha,\beta}_{0}} f(x)=\caprb f(x)$
and ${^CD^{\alpha,\beta}_{1}} f(x)=\capla f(x)$.
The operator \eqref{op} is obviously linear. Using equations
\eqref{ip} it is easy to derive the following rule of fractional
integration by parts for $\opab$:
\begin{multline}
\label{byparts}
\int_{a}^{b}  g(x) \, \opab f(x)dx
=\gamma\left[f(x)\intra g(x)\right]^{x=b}_{x=a}\\
+(1-\gamma)\left[-f(x)\intlb g(x)\right]^{x=b}_{x=a}
+\int_a^b f(x)\opbac g(x)dx,
\end{multline}
where $\opbac:=(1-\gamma){_aD_x^\beta}+\gamma {_xD_b^\alpha}$.
Let $N\in\N$ and $\mathbf{f}=[f_1,\ldots,f_N]:[a,b]\rightarrow
\R^{N}$ with $f_i\in AC([a,b])$, $i=1,\ldots,N$; $\alpha, \beta,
\gamma\in \mathbb{R}^N$ with $\alpha_i, \beta_i \in(0,1)$ and
$\gamma_i\in [0,1]$, $i=1,\ldots,N$. Then,
$$
\opab \mathbf{f}(x):= \left[\opabp f_1(x),\ldots,\opabN f_N(x)\right].
$$
Let $\mathbf{D}$ denote the set of all functions
$\mathbf{y}:[a,b]\rightarrow \R^{N}$ such that $\opab \mathbf{y}$
exists and is continuous on the interval $[a,b]$. We endow
$\mathbf{D}$ with the following norm:
\begin{equation*}
    \|\mathbf{y}\|_{1,\infty}:=\max_{a\leq x \leq
    b}\|\mathbf{y}(x)\|+\max_{a\leq x \leq
    b}\|\opab\mathbf{y}(x)\|,
\end{equation*}
where $\|\cdot\|$ is a norm in $\R^N$.
Along the work we denote by $\partial_i K$, $i=1,\ldots,M$ ($M\in
\N$), the partial derivative of function $K:\R^M\rightarrow \R$ with
respect to its $i$th argument.
Let $\mathbf{\lambda} \in \R^r$. For simplicity of notation we
introduce the operators $[\mathbf{y}]^{\alpha,\beta}_{\gamma}$
and $_{\lambda}\{\mathbf{y}\}^{\alpha,\beta}_{\gamma}$ by
\begin{equation*}
\begin{split}
[\mathbf{y}]^{\alpha,\beta}_{\gamma}(x) &:= \left(x,\mathbf{y}(x),\opab
\mathbf{y}(x)\right) \, , \\
_{\lambda}\{\mathbf{y}\}^{\alpha,\beta}_\gamma(x) &:= \left(x,\mathbf{y}(x),\opab
\mathbf{y}(x),\lambda_1,\ldots,\lambda_r\right) \, .
\end{split}
\end{equation*}


\section{Calculus of variations via $\mathbf{\opab}$}
\label{ssec:pro}

We are concerned with the problem of finding the minimum of a functional
$\mathcal{J}: \mathcal{D}\rightarrow \R$, where $ \mathcal{D}$ is a
subset of $\mathbf{D}$. The formulation of a problem of the calculus
of variations requires two steps: the specification of a performance
criterion, and the statement of physical constraints
that should be satisfied. The performance criterion $\mathcal{J}$,
also called cost functional (or objective), must be specified
for evaluating quantitatively the performance of the system under study.
We consider the following cost:
\begin{equation*}
\mathcal{J}(\mathbf{y})=\int_a^b L[\mathbf{y}]^{\alpha,\beta}_{\gamma}(x) \, dx,
\end{equation*}
where $x\in [a,b]$ is the independent variable;
$\mathbf{y}(x)\in \R^N$ is a real vector variable, the functions
$\mathbf{y}$ are generally called trajectories or curves; $\opab
\mathbf{y}(x)\in \R^N$ stands for the fractional derivative of
$\mathbf{y}(x)$; and $L\in C^1([a,b]\times\mathbb{R}^{2N};
\mathbb{R})$ is the Lagrangian.

Enforcing constraints in the optimization problem reduces the set of
candidate functions and leads to the following definition.

\begin{definition}
A trajectory $\mathbf{y}\in \mathbf{D}$ is said to be an admissible
trajectory, provided it satisfies all the constraints of the problem along
the interval $[a,b]$. The set of admissible trajectories is defined as
$\mathcal{D}:=\{\mathbf{y}\in \mathbf{D}:\mathbf{y} \mbox{ is admissible}\}$.
\end{definition}

We now define what is meant by a minimum of $\mathcal{J}$ on
$\mathcal{D}$.

\begin{definition}
A trajectory $\bar{\mathbf{y}}\in \mathcal{D}$ is said to be a local
minimizer for $\mathcal{J}$ on $\mathcal{D}$, if there exists
$\delta>0$ such that $\mathcal{J}(\bar{\mathbf{y}})\leq
\mathcal{J}(\mathbf{y})$ for all $\mathbf{y}\in \mathcal{D}$ with
$\|\mathbf{y}-\bar{\mathbf{y}}\|_{1,\infty}<\delta$.
\end{definition}

The concept of variation of a functional is central to the solution
of problems of the calculus of variations.

\begin{definition}
The first variation of $\mathcal{J}$ at $\mathbf{y}\in \mathbf{D}$
in the direction $\mathbf{h}\in \mathbf{D}$ is defined as
\begin{equation*}
\delta\mathcal{J}(\mathbf{y};\mathbf{h})
:=\lim_{\varepsilon\rightarrow 0}\frac{\mathcal{J}(\mathbf{y}
+\varepsilon\mathbf{h})-\mathcal{J}(\mathbf{y})}{\varepsilon}
=\left.\frac{\partial}{\partial\varepsilon}\mathcal{J}(\mathbf{y}
+\varepsilon\mathbf{h})\right|_{\varepsilon=0},
\end{equation*}
provided the limit exists.
\end{definition}

\begin{definition}
A direction $\mathbf{h}\in \mathbf{D}$, $\mathbf{h}\neq 0$, is said
to be an admissible variation for $\mathcal{J}$ at $\mathbf{y}\in \mathcal{D}$ if
\begin{itemize}
\item[(i)] $\delta\mathcal{J}(\mathbf{y};\mathbf{h})$ exists; and
\item[(ii)] $\mathbf{y}+\varepsilon\mathbf{h}\in \mathcal{D}$ for
all sufficiently small $\varepsilon$.
\end{itemize}
\end{definition}

The following well known result offers a necessary optimality
condition for the problems of the calculus of variations,
based on the concept of variations.

\begin{theorem}[see, \textrm{e.g.}, Proposition~5.5 of \cite{Trout}]
\label{nesse_con}
Let $\mathcal{J}$ be a functional defined on $\mathcal{D}$.
Suppose that $\mathbf{y}$ is a local minimizer for
$\mathcal{J}$ on $\mathcal{D}$. Then,
$\delta\mathcal{J}(\mathbf{y};\mathbf{h})=0$ for each admissible
variation $\mathbf{h}$ at $\mathbf{y}$.
\end{theorem}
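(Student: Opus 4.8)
The plan is to reduce the statement to the elementary fact that a differentiable real function of one variable has vanishing derivative at an interior local minimum (Fermat's theorem). First I would fix an arbitrary admissible variation $\mathbf{h}$ at $\mathbf{y}$ and introduce the auxiliary function $\phi(\varepsilon):=\mathcal{J}(\mathbf{y}+\varepsilon\mathbf{h})$ of the single real parameter $\varepsilon$. By the very definition of the first variation, $\delta\mathcal{J}(\mathbf{y};\mathbf{h})=\phi'(0)$, and condition (i) in the definition of admissible variation guarantees that this derivative exists.

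Next I would argue that $\varepsilon=0$ is an interior local minimizer of $\phi$. Condition (ii) ensures that $\mathbf{y}+\varepsilon\mathbf{h}\in\mathcal{D}$ for all sufficiently small $\varepsilon$, so $\phi$ is well defined on a neighbourhood of $0$. Since $\|(\mathbf{y}+\varepsilon\mathbf{h})-\mathbf{y}\|_{1,\infty}=|\varepsilon|\,\|\mathbf{h}\|_{1,\infty}$, for $|\varepsilon|$ small enough we simultaneously have admissibility of $\mathbf{y}+\varepsilon\mathbf{h}$ and the bound $\|(\mathbf{y}+\varepsilon\mathbf{h})-\mathbf{y}\|_{1,\infty}<\delta$, where $\delta>0$ is the radius furnished by the hypothesis that $\mathbf{y}$ is a local minimizer. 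Hence $\phi(\varepsilon)=\mathcal{J}(\mathbf{y}+\varepsilon\mathbf{h})\geq\mathcal{J}(\mathbf{y})=\phi(0)$ for all such $\varepsilon$, so $\phi$ attains a local minimum at $\varepsilon=0$.

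Finally, since $\phi$ has a local minimum at the interior point $\varepsilon=0$ and $\phi'(0)$ exists, Fermat's theorem gives $\phi'(0)=0$, that is, $\delta\mathcal{J}(\mathbf{y};\mathbf{h})=0$. As $\mathbf{h}$ was an arbitrary admissible variation, this establishes the claim. I do not anticipate any genuine obstacle here; the only point requiring a little care is the bookkeeping in the previous paragraph, namely verifying that the range of $\varepsilon$ can be chosen small enough to enforce both the membership $\mathbf{y}+\varepsilon\mathbf{h}\in\mathcal{D}$ and the norm bound needed to invoke local minimality at the same time.
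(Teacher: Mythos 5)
The paper offers no proof of this theorem --- it is quoted verbatim from Proposition~5.5 of Troutman's book --- so there is nothing internal to compare against; your argument is the standard proof of that cited result and is correct. The reduction to Fermat's theorem via $\phi(\varepsilon):=\mathcal{J}(\mathbf{y}+\varepsilon\mathbf{h})$, using condition (i) of the admissibility definition for the existence of $\phi'(0)$ and condition (ii) together with the homogeneity $\|\varepsilon\mathbf{h}\|_{1,\infty}=|\varepsilon|\,\|\mathbf{h}\|_{1,\infty}$ to make $\varepsilon=0$ an interior local minimum of $\phi$, is exactly what is needed.
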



\subsection{Elementary problem of the $\mathbf{\opab}$ fractional calculus of variations}
\label{ssec:EL}

Let us begin with the following fundamental problem:
\begin{equation}
\label{Funct1} \mathcal{J}(\mathbf{y})
=\int_a^b L[\mathbf{y}]^{\alpha,\beta}_{\gamma}(x) \, dx \longrightarrow \min\\
\end{equation}
over all $\mathbf{y}\in \mathbf{D}$ satisfying the boundary
conditions
\begin{equation}
\label{boun2}
\mathbf{y}(a)=\mathbf{y}^{a}, \quad
\mathbf{y}(b)=\mathbf{y}^{b},
\end{equation}
where $\mathbf{y}^{a},\mathbf{y}^{b}\in \R^N$ are given.
The next theorem gives the fractional Euler--Lagrange equation for the
problem \eqref{Funct1}--\eqref{boun2}.

\begin{theorem}
\label{Theo E-L1}
Let $\mathbf{y}=(y_1,\ldots,y_N)$ be a local
minimizer to problem \eqref{Funct1}--\eqref{boun2}. Then,
$\mathbf{y}$ satisfies the following system of $N$ fractional Euler--Lagrange
equations:
\begin{equation}
\label{E-L1}
\partial_i L[\mathbf{y}]^{\alpha,\beta}_{\gamma}(x)
+\opbaci \partial_{N+i} L[\mathbf{y}]^{\alpha,\beta}_{\gamma}(x)=0,
\quad i=2,\ldots N+1,
\end{equation}
for all $x\in[a,b]$.
\end{theorem}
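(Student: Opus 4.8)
The plan is to derive the Euler–Lagrange system by the classical variational recipe adapted to the combined Caputo operator $\opab$: form the first variation along an admissible direction, apply the fractional integration-by-parts formula \eqref{byparts} to move the fractional derivative off the variation, invoke Theorem~\ref{nesse_con} together with the fundamental lemma of the calculus of variations, and read off the equations componentwise.

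First I would fix an index $i\in\{1,\ldots,N\}$ and take an admissible variation $\mathbf{h}=(h_1,\ldots,h_N)\in\mathbf{D}$ that perturbs only the $i$th component, with $h_i(a)=h_i(b)=0$ so that the boundary conditions \eqref{boun2} are preserved and $\mathbf{y}+\varepsilon\mathbf{h}\in\mathcal{D}$ for all $\varepsilon$. Since $L\in C^1$, differentiating $\mathcal{J}(\mathbf{y}+\varepsilon\mathbf{h})$ under the integral sign and evaluating at $\varepsilon=0$ gives, by linearity of $\opab$,
\begin{equation*}
\delta\mathcal{J}(\mathbf{y};\mathbf{h})
=\int_a^b\Bigl(\partial_{i+1} L[\mathbf{y}]^{\alpha,\beta}_{\gamma}(x)\,h_i(x)
+\partial_{N+i+1} L[\mathbf{y}]^{\alpha,\beta}_{\gamma}(x)\,\opab h_i(x)\Bigr)dx .
\end{equation*}
(The index shift to $i+1$ and $N+i+1$ reflects that the first slot of $L$ is the variable $x$, matching the range $i=2,\ldots,N+1$ in the statement.)

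Next I would transform the second term using the integration-by-parts rule \eqref{byparts} with $g=\partial_{N+i+1}L[\mathbf{y}]^{\alpha,\beta}_{\gamma}$ and $f=h_i$. Because $h_i$ vanishes at both endpoints, the two boundary contributions in \eqref{byparts} drop out, leaving
\begin{equation*}
\int_a^b \partial_{N+i+1}L[\mathbf{y}]^{\alpha,\beta}_{\gamma}(x)\,\opab h_i(x)\,dx
=\int_a^b h_i(x)\,\opbac\,\partial_{N+i+1}L[\mathbf{y}]^{\alpha,\beta}_{\gamma}(x)\,dx,
\end{equation*}
where $\opbac=(1-\gamma)\,{_aD_x^\beta}+\gamma\,{_xD_b^\alpha}$ is exactly the adjoint operator appearing as $\opbaci$ in \eqref{E-L1}. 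By Theorem~\ref{nesse_con} the first variation vanishes at the minimizer, so after collecting terms I obtain $\int_a^b h_i(x)\bigl(\partial_{i+1}L+\opbac\,\partial_{N+i+1}L\bigr)\,dx=0$ for every such $h_i$. The fundamental lemma of the calculus of variations then forces the integrand's bracket to vanish pointwise on $[a,b]$, which is precisely the $i$th Euler–Lagrange equation; ranging over $i$ yields the full system.

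The main obstacle I anticipate is justifying the differentiation under the integral sign and the application of the fractional integration-by-parts formula with the required regularity. Formula \eqref{byparts} rests on \eqref{ip}, which in turn demands integrability conditions (the $L_p$/$L_q$ hypotheses behind \eqref{ipi}); one must check that $\partial_{N+i+1}L[\mathbf{y}]^{\alpha,\beta}_{\gamma}$ and $h_i$ lie in admissible spaces so that both the right Riemann–Liouville derivative ${_xD_b^\alpha}$ and the left one ${_aD_x^\beta}$ of $g$ exist and the boundary terms are well defined. Granting the working assumption that the optimal $\mathbf{y}$ is regular enough for these operations (as is standard in this setting, and implicit in the continuity built into $\mathbf{D}$), the remaining steps are routine.
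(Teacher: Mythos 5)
Your proposal is correct and follows essentially the same route as the paper: compute the first variation, invoke Theorem~\ref{nesse_con}, apply the fractional integration-by-parts formula \eqref{byparts}, discard the boundary terms using $h_i(a)=h_i(b)=0$, and finish with the fundamental lemma of the calculus of variations. The only cosmetic difference is that you perturb one component at a time whereas the paper varies all components simultaneously and extracts the $N$ equations afterwards; the key steps and the identification of the adjoint operator $\opbaci$ are identical.
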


\begin{proof}
Suppose that $\mathbf{y}$ is a local minimizer for $\mathcal{J}$.
Let $\mathbf{h}$ be an arbitrary admissible variation for problem
\eqref{Funct1}--\eqref{boun2}, \textrm{i.e.}, $h_i(a)=h_i(b)=0$,
$i=1,\ldots,N$. Based on the differentiability properties of $L$ and
Theorem~\ref{nesse_con}, a necessary condition for $\mathbf{y}$ to
be a local minimizer is given by
$$
\left.\frac{\partial}{\partial\varepsilon}\mathcal{J}(\mathbf{y}
+\varepsilon\mathbf{h})\right|_{\varepsilon=0} = 0 \, ,
$$
that is,
\begin{equation}
\label{eq:FT}
\int_a^b\Biggl[\sum_{i=2}^{N+1}\partial_i L[\mathbf{y}]^{\alpha,\beta}_{\gamma}(x) h_{i-1}(x)
+\sum_{i=2}^{N+1}\partial_{N+i}L[\mathbf{y}]^{\alpha,\beta}_{\gamma}(x)\opabi h_{i-1}(x)\Biggr]dx=0.
\end{equation}
Using formulae \eqref{byparts} of integration by parts in the
second term of the integrand function, we get
\begin{multline}
\label{eq:aft:IP}
\int_a^b\left[\sum_{i=2}^{N+1}\partial_i
L[\mathbf{y}]^{\alpha,\beta}_{\gamma}(x)+\opbaci\partial_{N+i}
L[\mathbf{y}]^{\alpha,\beta}_{\gamma}(x)\right]h_{i-1}(x)dx\\
+\gamma\left.\left[\sum_{i=2}^{N+1}h_{i-1}(x)\left(\intrai
\partial_{N+i}L[\mathbf{y}]^{\alpha,\beta}_{\gamma}(x)\right)\right]\right|^{x=b}_{x=a}\\
-(1-\gamma)\left.\left[\sum_{i=2}^{N+1}h_{i-1}(x)\left(\intlbi
\partial_{N+i}L[\mathbf{y}]^{\alpha,\beta}_{\gamma}(x)\right)\right]\right|^{x=b}_{x=a}=0.
\end{multline}
Since $h_i(a)=h_i(b)=0$, $i=1,\ldots,N$, by the fundamental lemma of
the calculus of variations we deduce that
\begin{equation*}
\partial_i L[\mathbf{y}]^{\alpha,\beta}_{\gamma}(x)
+\opbaci\partial_{N+i}L[\mathbf{y}]^{\alpha,\beta}_{\gamma}(x)=0,
\quad i=2,\ldots,N+1,
\end{equation*}
for all $x\in[a,b]$.
\end{proof}

Observe that if $\alpha$ and $\beta$ go to $1$, then $\capla$ can be
replaced with $\frac{d}{dx}$ and $\caprb$ with $-\frac{d}{dx}$ (see
\cite{Podlubny}). Thus, if $\gamma=1$ or $\gamma=0$, then for
$\alpha,\beta \rightarrow 1$ we obtain a corresponding result in the
classical context of the calculus of variations (see, \textrm{e.g.},
\cite[Proposition~6.1]{Trout}).


\subsection{Fractional transversality conditions}
\label{sec:tran}

Let $l\in \{1,\ldots,N\}$. Assume that $\mathbf{y}(a)=\mathbf{y}^a$,
$y_i(b)=y_i^b$, $i=1,\ldots,N$ , $i\neq l$, but $y_l(b)$ is free.
Then, $h_l(b)$ is free and by equations \eqref{E-L1} and
\eqref{eq:aft:IP} we obtain
\begin{equation*}
\Bigl[\gamma {_xI_b^{1-\alpha_l}}
\partial_{N+l+1}L[\mathbf{y}]^{\alpha,\beta}_{\gamma}(x)
\left.-(1-\gamma) {_aI_x^{1-\beta_l}}\partial_{N+1+l}
L[\mathbf{y}]^{\alpha,\beta}_{\gamma}(x)\Bigr]\right|_{x=b}=0,
\end{equation*}
where $\alpha, \beta, \gamma \in \mathbb{R}$. Let us consider now
the case when $\mathbf{y}(a)=\mathbf{y}^a$, $y_i(b)=y_i^b$,
$i=1,\ldots,N$ , $i\neq l$, and  $y_l(b)$ is free but restricted by
a terminal condition $y_l(b)\leq y^{b}_l$. Then, in the optimal
solution $\mathbf{y}$, we have two possible types of outcome:
$y_l(b)< y^{b}_l$ or $y_l(b)= y^{b}_l$. If $y_l(b)< y^{b}_l$, then
there are admissible neighboring paths with terminal value both
above and below $y_l(b)$, so that $h_l(b)$ can take either sign.
Therefore, the transversality conditions is
\begin{equation}
\label{tran:1}
\Bigl[\gamma {_xI_b^{1-\alpha_l}}
\partial_{N+l+1}L[\mathbf{y}]^{\alpha,\beta}_{\gamma}(x)
\left.-(1-\gamma) {_aI_x^{1-\beta_l}}\partial_{N+1+l}
L[\mathbf{y}]^{\alpha,\beta}_{\gamma}(x)\Bigr]\right|_{x=b}=0
\end{equation}
for $y_l(b)< y_l^{b}$. The other outcome $y_l(b)= y^{b}_l$ only
admits the neighboring paths with terminal value $\tilde{y}_l(b)\leq
y_l(b)$. Assuming, without loss of generality, that $h_l(b)\geq 0$,
this means that $\varepsilon \leq 0$. Hence, the transversality
condition, which has it root in the first order condition
\eqref{eq:FT}, must be changed to an inequality. For a minimization
problem, the $\leq$ type of inequality is called for, and we obtain
\begin{equation}
\label{tran:2}
\Bigl[\gamma {_xI_b^{1-\alpha_l}}
\partial_{N+l+1}L[\mathbf{y}]^{\alpha,\beta}_{\gamma}(x)
\left.-(1-\gamma) {_aI_x^{1-\beta_l}}\partial_{N+1+l}
L[\mathbf{y}]^{\alpha,\beta}_{\gamma}(x)\Bigr]\right|_{x=b} \leq 0
\end{equation}
for $y_l(b)= y^{b}_l$. Combining \eqref{tran:1} and \eqref{tran:2},
we may write the following transversality condition for a
minimization problem:
\begin{multline*} 
\Bigl[\gamma {_xI_b^{1-\alpha_l}}
\partial_{N+l+1}L[\mathbf{y}]^{\alpha,\beta}_{\gamma}(x)
\left.-(1-\gamma) {_aI_x^{1-\beta_l}}\partial_{N+1+l}
L[\mathbf{y}]^{\alpha,\beta}_{\gamma}(x)\Bigr]\right|_{x=b}
\leq 0, \quad y_l(b)\leq y^{b}_l,\\
(y_l(b)-y^{b}_l)
\Bigl[\gamma {_xI_b^{1-\alpha_l}}
\partial_{N+l+1}L[\mathbf{y}]^{\alpha,\beta}_{\gamma}(x)
\left.-(1-\gamma) {_aI_x^{1-\beta_l}}\partial_{N+1+l}
L[\mathbf{y}]^{\alpha,\beta}_{\gamma}(x)\Bigr]\right|_{x=b}=0.
\end{multline*}


\subsection{The $\mathbf{\opab}$ fractional isoperimetric problem}
\label{sec:iso}

Let us consider now the isoperimetric problem that consists of
minimizing \eqref{Funct1} over all $\mathbf{y}\in \mathbf{D}$
satisfying $r$ isoperimetric constraints
\begin{equation}
\label{cons2:iso} \mathcal{G}^j(\mathbf{y})=\int_{a}^{b}
G^j[\mathbf{y}]^{\alpha,\beta}_{\gamma}(x)dx=l_j, \quad j=1,\ldots,r,
\end{equation}
where $G^j\in C^1([a,b]\times\mathbb{R}^{2N}; \mathbb{R})$,
$j=1,\ldots,r$, and boundary conditions \eqref{boun2}.

Necessary optimality conditions for isoperimetric problems can be
obtained by the following general theorem.

\begin{theorem}[see, \textrm{e.g.}, Theorem~2 of \cite{G:H} on p.~91]
\label{nes:iso}
Let $\mathcal{J},\mathcal{G}^{1},\ldots,\mathcal{G}^r$ be functionals
defined in a neighborhood of $\mathbf{y}$ and having continuous
first variations in this neighborhood. Suppose that $\mathbf{y}$ is
a local minimizer of \eqref{Funct1} subject to the boundary conditions
\eqref{boun2} and the isoperimetric constrains \eqref{cons2:iso}.
Assume that there are functions $\mathbf{h}^{1},
\ldots,\mathbf{h}^{r}\in \mathbf{D}$ such that the matrix
$A=(a_{kl})$, $a_{kl}:=\delta\mathcal{G}^{k}(\mathbf{y};\mathbf{h}^l)$,
has maximal rank $r$. Then there exist constants
$\lambda_{1},\ldots,\lambda_{r}\in \R$ such that the functional
\begin{equation*}
\mathcal{F}:=\mathcal{J}-\sum_{j=1}^{r}\lambda _{j}\mathcal{G}^{j}
\end{equation*}
satisfies
\begin{equation}\label{var}
\delta\mathcal{F}(\mathbf{y};\mathbf{h})=0
\end{equation}
for all $\mathbf{h}\in \mathbf{D}$
\end{theorem}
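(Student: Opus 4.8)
The plan is to prove this Lagrange-multiplier theorem for isoperimetric problems by the standard two-parameter (or $(r+1)$-parameter) variation technique, adapted to the combined Caputo setting. First I would consider a variation of $\mathbf{y}$ depending on $r+1$ real parameters, namely $\hat{\mathbf{y}} = \mathbf{y} + \varepsilon_0 \mathbf{h} + \sum_{l=1}^{r}\varepsilon_l \mathbf{h}^l$, where $\mathbf{h}\in\mathbf{D}$ is an arbitrary fixed direction and $\mathbf{h}^1,\ldots,\mathbf{h}^r$ are the directions furnished by the rank hypothesis. Evaluating $\mathcal{J}$ and the $\mathcal{G}^j$ along this family produces functions of $(\varepsilon_0,\ldots,\varepsilon_r)$, and by linearity of the first variation their partial derivatives at the origin are the first variations in the respective directions.

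Next I would set up the constrained minimization in finite dimensions. The constraints $\mathcal{G}^j(\hat{\mathbf{y}})=l_j$ define, near the origin in $\R^{r+1}$, a level set; since the Jacobian of the map $(\varepsilon_0,\ldots,\varepsilon_r)\mapsto(\mathcal{G}^1,\ldots,\mathcal{G}^r)$ with respect to $(\varepsilon_1,\ldots,\varepsilon_r)$ at the origin is exactly the matrix $A=(a_{kl})$ with $a_{kl}=\delta\mathcal{G}^k(\mathbf{y};\mathbf{h}^l)$, which has maximal rank $r$ by hypothesis, the implicit function theorem lets me solve $(\varepsilon_1,\ldots,\varepsilon_r)$ as a $C^1$ function of $\varepsilon_0$ along the constraint set, with the curve passing through the origin when $\varepsilon_0=0$ (which corresponds to $\mathbf{y}$ itself, an admissible point since it satisfies the constraints). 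Because $\mathbf{y}$ is a local minimizer of $\mathcal{J}$ on the admissible set, $\varepsilon_0=0$ is an unconstrained local minimum of the composite one-variable function, so its derivative vanishes there.

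I would then invoke the classical finite-dimensional Lagrange multiplier rule: the vanishing of this derivative, together with the full-rank condition on $A$, yields constants $\lambda_1,\ldots,\lambda_r\in\R$ such that $\delta\mathcal{J}(\mathbf{y};\mathbf{h}) - \sum_{j=1}^r \lambda_j\,\delta\mathcal{G}^j(\mathbf{y};\mathbf{h}) = 0$. A subtle point is that a priori the multipliers could depend on the chosen direction $\mathbf{h}$; here I would note that, by solving the linear system determined by the $\mathbf{h}^l$ and $A$, the $\lambda_j$ are in fact determined by $\mathbf{y}$ alone (independently of $\mathbf{h}$), so the same constants work for every $\mathbf{h}\in\mathbf{D}$. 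Finally, by linearity of the first variation in the direction argument, $\delta\mathcal{J}(\mathbf{y};\mathbf{h}) - \sum_j\lambda_j\,\delta\mathcal{G}^j(\mathbf{y};\mathbf{h}) = \delta\mathcal{F}(\mathbf{y};\mathbf{h})$, giving \eqref{var} for all $\mathbf{h}$.

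The main obstacle I anticipate is the bookkeeping needed to guarantee that the multipliers are \emph{uniform} in $\mathbf{h}$ rather than direction-dependent; this is precisely where the maximal-rank hypothesis on $A$ is essential, since it makes the associated linear system for the $\lambda_j$ uniquely solvable and hence $\mathbf{h}$-independent. The analytic hypotheses (continuous first variations in a neighborhood of $\mathbf{y}$) are exactly what is required to apply the implicit function theorem and to differentiate under the limit defining the first variation, so no further regularity of the combined Caputo operator $\opab$ is needed beyond what is already built into the space $\mathbf{D}$. In fact, since this is cited as a known abstract result (Theorem~2 of \cite{G:H}), the cleanest route is to verify that the hypotheses of that abstract theorem are met and invoke it directly, rather than reconstructing the finite-dimensional argument in full.
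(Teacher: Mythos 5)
The paper offers no proof of this statement at all: it is imported verbatim as a known abstract result (Theorem~2 of \cite{G:H}, p.~91) and used as a black box in the derivation of \eqref{ele}. Your reconstruction via the $(r+1)$-parameter family $\mathbf{y}+\varepsilon_0\mathbf{h}+\sum_{l=1}^{r}\varepsilon_l\mathbf{h}^l$, the implicit function theorem applied to the constraint map (whose Jacobian in $(\varepsilon_1,\ldots,\varepsilon_r)$ at the origin is exactly $A$), and the finite-dimensional multiplier rule is the standard proof of that cited theorem, and it is sound; in particular you correctly isolate the one genuinely delicate point, namely that the $\lambda_j$ are determined by the $r\times r$ system $\delta\mathcal{J}(\mathbf{y};\mathbf{h}^l)=\sum_j\lambda_j\,\delta\mathcal{G}^j(\mathbf{y};\mathbf{h}^l)$, which involves only the fixed directions $\mathbf{h}^1,\ldots,\mathbf{h}^r$ and the invertible matrix $A$, so the multipliers are independent of $\mathbf{h}$. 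Your closing remark that the cleanest route is simply to check the hypotheses of the abstract theorem and invoke it is precisely what the paper does. One caveat worth recording: as transcribed, the theorem asserts $\delta\mathcal{F}(\mathbf{y};\mathbf{h})=0$ for \emph{all} $\mathbf{h}\in\mathbf{D}$, yet the minimality of $\mathbf{y}$ is only assumed subject to the boundary conditions \eqref{boun2}; your construction (and any correct proof) yields the conclusion only for directions $\mathbf{h}$, $\mathbf{h}^l$ with $\mathbf{h}(a)=\mathbf{h}(b)=\mathbf{h}^l(a)=\mathbf{h}^l(b)=0$, so that $\mathbf{y}+\varepsilon_0\mathbf{h}+\sum_l\varepsilon_l\mathbf{h}^l$ remains admissible. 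This restricted version is exactly what the paper uses immediately afterwards (it takes $\mathbf{h}(a)=\mathbf{h}(b)=0$ before deriving \eqref{ele}), so nothing downstream is affected, but you should state your conclusion for that class of variations rather than for all of $\mathbf{D}$.
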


Suppose now that assumptions of Theorem~\ref{nes:iso} hold. Then,
equation \eqref{var} is fulfilled for every $\mathbf{h} \in
\mathbf{D}$. Let us consider function $\mathbf{h}$ such that
$\mathbf{h}(a)=\mathbf{h}(b)=0$. Then, we have
\begin{multline*}
0=\delta \mathcal{F}(\mathbf{y};\mathbf{h})
=\frac{\partial}{\partial \varepsilon}\mathcal{F}(\mathbf{y}
+\varepsilon\mathbf{h})|_{\varepsilon=0}\\
=\int_a^b\Biggl[\sum_{i=2}^{N+1}\partial_i
F _{\lambda}\{\mathbf{y}\}^{\alpha,\beta}_\gamma(x)h_{i-1}(x)
+\sum_{i=2}^{N+1}\partial_{N+i} F _{\lambda}\{\mathbf{y}\}^{\alpha,\beta}_\gamma(x)
\opabi h_{i-1}(x)\Biggr]dx,
\end{multline*}
where the function $F:[a,b]\times \R^{2N}\times \R^r \rightarrow \R$
is defined by
$$
F _{\lambda}\{\mathbf{y}\}^{\alpha,\beta}_\gamma(x):=L[\mathbf{y}]^{\alpha,\beta}_{\gamma}(x)
-\sum_{j=1}^{r}\lambda_{j} G^{j}[\mathbf{y}]^{\alpha,\beta}_{\gamma}(x).
$$
On account of the above, and similarly in spirit to the proof of
Theorem~\ref{Theo E-L1}, we obtain
\begin{equation}
\label{ele}
\partial_{i} {_{\lambda}}F\{\mathbf{y}\}^{\alpha,\beta}_\gamma(x)
+\opbaci \partial_{N+i} {_{\lambda}}F\{\mathbf{y}\}^{\alpha,\beta}_\gamma(x)=0,
\quad i=2,\ldots N+1.
\end{equation}

Therefore, we have the following necessary optimality condition for
the fractional isoperimetric problems:

\begin{theorem}
\label{Th:B:EL-CV} Let assumptions of Theorem~\ref{nes:iso} hold. If
$\mathbf{y}$ is a local minimizer to the isoperimetric problem given
by \eqref{Funct1},\eqref{boun2} and \eqref{cons2:iso}, then
$\mathbf{y}$ satisfies the system of $N$ fractional Euler--Lagrange
equations \eqref{ele} for all $x\in[a,b]$.
\end{theorem}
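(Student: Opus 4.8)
The plan is to apply Theorem~\ref{nes:iso} to obtain the Lagrange multipliers, and then repeat the argument already carried out in Theorem~\ref{Theo E-L1}, but with the augmented Lagrangian $F$ in place of $L$. First I would invoke Theorem~\ref{nes:iso} under its stated hypotheses: since $\mathbf{y}$ is a local minimizer of $\mathcal{J}$ subject to the boundary conditions \eqref{boun2} and the isoperimetric constraints \eqref{cons2:iso}, and since the matrix $A=(a_{kl})$ with $a_{kl}=\delta\mathcal{G}^k(\mathbf{y};\mathbf{h}^l)$ has maximal rank $r$, there exist constants $\lambda_1,\ldots,\lambda_r\in\R$ such that the functional $\mathcal{F}=\mathcal{J}-\sum_{j=1}^r\lambda_j\mathcal{G}^j$ satisfies $\delta\mathcal{F}(\mathbf{y};\mathbf{h})=0$ for every $\mathbf{h}\in\mathbf{D}$.

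Next I would specialize the direction $\mathbf{h}$ to those variations satisfying $\mathbf{h}(a)=\mathbf{h}(b)=0$, which is legitimate precisely because the condition $\delta\mathcal{F}(\mathbf{y};\mathbf{h})=0$ holds for \emph{all} $\mathbf{h}\in\mathbf{D}$, in particular for this restricted class. Computing the first variation of $\mathcal{F}$ by differentiating $\mathcal{F}(\mathbf{y}+\varepsilon\mathbf{h})$ at $\varepsilon=0$ and using the linearity of the integral together with the definition of $F$, I would obtain the integral identity
\begin{equation*}
\int_a^b\Biggl[\sum_{i=2}^{N+1}\partial_i F{_{\lambda}}\{\mathbf{y}\}^{\alpha,\beta}_\gamma(x)h_{i-1}(x)
+\sum_{i=2}^{N+1}\partial_{N+i}F{_{\lambda}}\{\mathbf{y}\}^{\alpha,\beta}_\gamma(x)\opabi h_{i-1}(x)\Biggr]dx=0,
\end{equation*}
which is exactly the analogue of \eqref{eq:FT} with $L$ replaced by $F$.

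From here the argument parallels the proof of Theorem~\ref{Theo E-L1} verbatim. I would apply the fractional integration-by-parts formula \eqref{byparts} to the second term of the integrand, transferring the operator $\opabi$ off $h_{i-1}$ and onto $\partial_{N+i}F$ at the cost of producing the adjoint operator $\opbaci$ plus boundary terms of the form $\gamma[h_{i-1}\,\intrai\partial_{N+i}F]^{x=b}_{x=a}$ and $-(1-\gamma)[h_{i-1}\,\intlbi\partial_{N+i}F]^{x=b}_{x=a}$. Since $\mathbf{h}$ vanishes at both endpoints, all boundary contributions drop out, leaving $\int_a^b\sum_{i}[\partial_i F+\opbaci\partial_{N+i}F]\,h_{i-1}\,dx=0$. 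Because the $h_{i-1}$ are otherwise arbitrary, the fundamental lemma of the calculus of variations yields \eqref{ele} pointwise for all $x\in[a,b]$ and each $i=2,\ldots,N+1$.

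The one genuinely nontrivial point—and the place where care is needed—is the justification that Theorem~\ref{nes:iso} applies at all, namely that the first variations of $\mathcal{J}$ and the $\mathcal{G}^j$ exist and are continuous in a neighborhood of $\mathbf{y}$ and that the rank condition on $A$ holds; but these are precisely the standing hypotheses transported into our statement via ``Let assumptions of Theorem~\ref{nes:iso} hold,'' so the main obstacle is discharged by assumption rather than by computation. Everything downstream is the routine variational manipulation already displayed in the text preceding the theorem, so the proof reduces to assembling \eqref{var}, the restriction to endpoint-vanishing $\mathbf{h}$, and the integration-by-parts-plus-fundamental-lemma step into equation \eqref{ele}.
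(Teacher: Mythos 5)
Your proposal is correct and follows essentially the same route as the paper: the text preceding Theorem~\ref{Th:B:EL-CV} invokes Theorem~\ref{nes:iso} to obtain the multipliers and the condition $\delta\mathcal{F}(\mathbf{y};\mathbf{h})=0$, restricts to variations vanishing at the endpoints, and then repeats the integration-by-parts and fundamental-lemma argument of Theorem~\ref{Theo E-L1} with the augmented Lagrangian $F$ in place of $L$. No substantive difference from the paper's own proof.
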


Suppose now that constraints \eqref{cons2:iso} are characterized by
inequalities
\begin{equation*}
\mathcal{G}^j(\mathbf{y})
=\int_{a}^{b} G^j[\mathbf{y}]^{\alpha,\beta}_{\gamma}(x)dx
\leq l_j, \quad j=1,\ldots,r.
\end{equation*}
In this case we can set
\begin{equation*}
\int_{a}^{b}\left(G^j[\mathbf{y}]^{\alpha,\beta}_{\gamma}(x)
-\frac{l_j}{b-a}\right)dx
+\int_{a}^{b}(\phi_j(x))^2dx=0,
\end{equation*}
$j=1,\ldots,r$, where $\phi_j$ have the some continuity properties
as $y_i$. Therefore, we obtain the following problem:
\begin{equation*}
\hat{\mathcal{J}}(y)=\int_a^b \hat{L}(x,\mathbf{y}(x),
\opab\mathbf{y}(x),\phi_{1}(x),\ldots,\phi_{r}(x)) 
\, dx \longrightarrow \min
\end{equation*}
subject to $r$ isoperimetric constraints
\begin{equation*}
\int_{a}^{b}\left[G^j[\mathbf{y}]^{\alpha,\beta}_{\gamma}(x)
-\frac{l_j}{b-a}+(\phi_j(x))^2\right]dx=0, \quad j=1,\ldots,r,
\end{equation*}
and boundary conditions \eqref{boun2}. Assuming that assumptions of
Theorem~\ref{Th:B:EL-CV} are satisfied, we conclude that there exist
constants $\lambda_{j}\in \R$, $j=1,\ldots,r$, for which the system
of equations
\begin{multline}
\label{iso:1:L:EL}
\opbaci \partial_{N+i}\hat{F}(x,\mathbf{y}(x),
\opab\mathbf{y}(x),\lambda_1,
\ldots,\lambda_r,\phi_{1}(x),\ldots,\phi_{r}(x))\\
+\partial_i\hat{F}(x,\mathbf{y}(x),
\opab\mathbf{y}(x),\lambda_1,\ldots,\lambda_r,\phi_{1}(x),\ldots,\phi_{r}(x))=0,
\end{multline}
$i=2,\ldots, N+1$,
$\hat{F}=\hat{L}+\sum_{j=1}^r\lambda_j(G^j-\frac{l_j}{b-a}+\phi_j^2)$ and
\begin{equation}
\label{iso:2:L:EL}
\lambda_j\phi_j(x)=0,  \quad j=1,\ldots,r,
\end{equation}
hold for all $x\in[a,b]$. Note that it is enough to assume that the
regularity condition holds for the constraints which are active at
the local minimizer $\mathbf{y}$ (constraint $\mathcal{G}^k$ is active 
at $\mathbf{y}$ if $\mathcal{G}^k(\mathbf{y})=l_k$). Indeed,
suppose that $l<r$ constrains, say
$\mathcal{G}^1,\ldots,\mathcal{G}^l$ for simplicity, are active at
the local minimizer $\mathbf{y}$, and there are functions
$\mathbf{h}^{1},\ldots,\mathbf{h}^{l}\in \mathbf{D}$ such that the
matrix
\begin{equation*}
B=(b_{kj}),\quad
b_{kj}:=\delta\mathcal{G}^{k}(\mathbf{y};\mathbf{h}^j),\quad
k,j=1,\ldots,l<r
\end{equation*}
has maximal rank $l$. Since the inequality constraints
$\mathcal{G}^{l+1},\ldots,\mathcal{G}^r$ are inactive, the condition
\eqref{iso:2:L:EL} is trivially satisfied by taking
$\lambda_{l+1}=\cdots=\lambda_{r}=0$. On the other hand, since the
inequality constraints $\mathcal{G}^1,\ldots,\mathcal{G}^l$ are
active and satisfy a regularity condition at $\mathbf{y}$, the
conclusion that there exist constants $\lambda_{j}\in \R$,
$j=1,\ldots,r$, such that \eqref{iso:1:L:EL} holds follow from
Theorem~\ref{Th:B:EL-CV}. Moreover, \eqref{iso:2:L:EL} is trivially
satisfied for $j=1,\ldots,l$.


\subsection{The $\mathbf{\opab}$ fractional Lagrange problem}
\label{sec:lagr}

Let us consider the following Lagrange problem, which consists of
minimizing \eqref{Funct1} over all $\mathbf{y}\in \mathbf{D}$
satisfying $r$ independent constraints ($r<N$)
\begin{equation}\label{cons2}
 G^j[\mathbf{y}]^{\alpha,\beta}_{\gamma}(x)=0, \quad j=1,\ldots,r,
\end{equation}
and boundary conditions \eqref{boun2}. In mechanics, constraints of
type \eqref{cons2} are called nonholonomic. By the independence of
the $r$ constraints $G^j\in C^1([a,b]\times\mathbb{R}^{2N};
\mathbb{R})$ it is meant that it should exist a nonvanishing
Jacobian determinant of order $r$, such as
$\left|\frac{\partial(G^1,\ldots,G^r)}{\partial(p_{N+2},\ldots,p_{N+2+r})}\right|\neq
0$. Of course, any $r$ of $p_j$, $j=N+2,...,2N+1$, can be used, not
necessarily the first $r$.

\begin{theorem}
\label{lagrange} A function $\mathbf{y}$ which is a solution to
problem \eqref{Funct1},\eqref{boun2} subject to $r$ independent
constraints ($r<N$) \eqref{cons2} satisfies, for suitably chosen
functions $\lambda_j$, $j=1,\ldots,r$, the system of $N$ fractional
Euler--Lagrange equations
\begin{equation*}\label{L:EL}
\partial_i F[\mathbf{y},\mathbf{\lambda}]^{\alpha,\beta}_{\gamma}(x)
+\opbaci \partial_{N+i}F[\mathbf{y},\mathbf{\lambda}]^{\alpha,\beta}_{\gamma}(x)=0,
\quad x\in[a,b], \quad i=2,\ldots, N+1,
\end{equation*}
where
$F[\mathbf{y},\mathbf{\lambda}]^{\alpha,\beta}_{\gamma}(x)
=L[\mathbf{y}]^{\alpha,\beta}_{\gamma}(x)
+\sum_{j=1}^r\lambda_j(x)G^j[\mathbf{y}]^{\alpha,\beta}_{\gamma}(x)$.
\end{theorem}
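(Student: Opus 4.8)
The plan is to follow the pattern of the proof of Theorem~\ref{Theo E-L1}, letting the constraints \eqref{cons2} manufacture the multiplier functions and using the fractional integration-by-parts formula \eqref{byparts} to extract the Euler--Lagrange expressions. Let $\mathbf{y}$ be a solution and let $\mathbf{h}=(h_1,\ldots,h_N)\in\mathbf{D}$ be an admissible variation, so that $h_i(a)=h_i(b)=0$, $i=1,\ldots,N$, and $\mathbf{y}+\varepsilon\mathbf{h}$ satisfies \eqref{cons2} for all sufficiently small $\varepsilon$. First I would differentiate each identity $G^j[\mathbf{y}+\varepsilon\mathbf{h}]^{\alpha,\beta}_{\gamma}(x)=0$ with respect to $\varepsilon$ at $\varepsilon=0$, obtaining, for every $j=1,\ldots,r$ and every $x\in[a,b]$, the pointwise relation
\[
\sum_{i=2}^{N+1}\Bigl[\partial_i G^j[\mathbf{y}]^{\alpha,\beta}_{\gamma}(x)\,h_{i-1}(x)+\partial_{N+i}G^j[\mathbf{y}]^{\alpha,\beta}_{\gamma}(x)\,\opabi h_{i-1}(x)\Bigr]=0.
\]
Multiplying by as-yet-undetermined functions $\lambda_j(x)$, summing over $j$, and integrating produces a quantity that vanishes identically, so it may be added freely to the first variation.

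Next I would invoke Theorem~\ref{nesse_con}: since $\mathbf{y}$ is a minimizer, $\delta\mathcal{J}(\mathbf{y};\mathbf{h})=0$, which written out coincides with the integral in \eqref{eq:FT}. Adding the vanishing multiplier term merges the integrand into $F[\mathbf{y},\mathbf{\lambda}]^{\alpha,\beta}_{\gamma}=L[\mathbf{y}]^{\alpha,\beta}_{\gamma}+\sum_{j=1}^r\lambda_j G^j[\mathbf{y}]^{\alpha,\beta}_{\gamma}$, giving
\[
\int_a^b\sum_{i=2}^{N+1}\Bigl[\partial_i F[\mathbf{y},\mathbf{\lambda}]^{\alpha,\beta}_{\gamma}(x)\,h_{i-1}(x)+\partial_{N+i}F[\mathbf{y},\mathbf{\lambda}]^{\alpha,\beta}_{\gamma}(x)\,\opabi h_{i-1}(x)\Bigr]dx=0.
\]
Applying \eqref{byparts} to the second summand and using $h_{i-1}(a)=h_{i-1}(b)=0$ to discard every boundary term, this reduces, exactly as in \eqref{eq:aft:IP}, to
\[
\int_a^b\sum_{i=2}^{N+1}\Bigl[\partial_i F[\mathbf{y},\mathbf{\lambda}]^{\alpha,\beta}_{\gamma}(x)+\opbaci\partial_{N+i}F[\mathbf{y},\mathbf{\lambda}]^{\alpha,\beta}_{\gamma}(x)\Bigr]h_{i-1}(x)\,dx=0.
\]

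The decisive step is to fix the multipliers. The variations are not independent: the $r$ pointwise relations above let me solve for $h_1,\ldots,h_r$ in terms of the remaining $h_{r+1},\ldots,h_N$, which is legitimate precisely because the independence hypothesis supplies a nonvanishing Jacobian of $(G^1,\ldots,G^r)$ with respect to $r$ of the velocity arguments (taken, without loss of generality, to be the first $r$). I would then choose $\lambda_1,\ldots,\lambda_r$ so that the first $r$ Euler--Lagrange expressions vanish, that is $\partial_i F+\opbaci\partial_{N+i}F=0$ for $i=2,\ldots,r+1$. The main obstacle is the solvability of this system for the $\lambda_j$: because $\opbaci$ carries no elementary Leibniz rule, these are genuinely fractional equations in the unknowns $\lambda_j$ rather than algebraic ones, and one must argue --- again through the nonvanishing Jacobian $(\partial_{N+i}G^j)$, which governs the top-order contribution of the $\lambda_j$ --- that suitable functions $\lambda_j$ exist. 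This is exactly where the regularity assumption does its essential work.

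Once the multipliers are so chosen, the displayed identity retains only the free indices $i=r+2,\ldots,N+1$. Since $h_{r+1},\ldots,h_N$ may be prescribed arbitrarily (with zero boundary values), the components $h_1,\ldots,h_r$ being recovered from the constraint relations so as to keep $\mathbf{h}$ admissible, varying one free component at a time and appealing to the fundamental lemma of the calculus of variations yields $\partial_i F+\opbaci\partial_{N+i}F=0$ for $i=r+2,\ldots,N+1$ as well. Together with the $r$ equations used to define the multipliers, this establishes the full system of $N$ fractional Euler--Lagrange equations for all $x\in[a,b]$, completing the proof.
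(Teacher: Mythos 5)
Your proof follows the same route as the paper's own: differentiate the constraints $G^j[\mathbf{y}+\varepsilon\mathbf{h}]^{\alpha,\beta}_{\gamma}=0$ at $\varepsilon=0$, add the $\lambda_j$-weighted (identically vanishing) constraint variations to the first variation, integrate by parts via \eqref{byparts} discarding boundary terms, choose $\lambda_1,\ldots,\lambda_r$ to annihilate the coefficients of the dependent variations $h_1,\ldots,h_r$, and apply the fundamental lemma to the remaining free components. Your extra remark on the solvability of the system defining the multipliers flags a point the paper simply asserts without discussion, but it does not alter the structure of the argument.
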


\begin{proof}
Suppose that $\mathbf{y}=(y_1,\ldots, y_N)$ is the solution to
problem defined by \eqref{Funct1},\eqref{boun2}, and \eqref{cons2}.
Let $\mathbf{h}=(h_1,\ldots,h_N)$ be an arbitrary admissible
variation, \textrm{i.e.}, $h_i(a)=h_i(b)=0$, $i=1,\ldots,N$, and
$G^j[\mathbf{y}+\varepsilon \mathbf{h}]^{\alpha,\beta}_{\gamma}(x)=0$,
$j=1,\ldots,r,$ where $\varepsilon\in\mathbb{R}$ is a small parameter.
Because $\mathbf{y}=(y_1,\ldots, y_N)$ is a solution to problem defined by
\eqref{Funct1},\eqref{boun2}, and \eqref{cons2}, it follows that
$$
\left.\frac{\partial}{\partial\varepsilon}\mathcal{J}(\mathbf{y}
+\varepsilon\mathbf{h})\right|_{\varepsilon=0} = 0 \, ,
$$
that is,
\begin{equation}
\label{eq:FTL}
\int_a^b\Biggl[\sum_{i=2}^{N+1}\partial_i L[\mathbf{y}]^{\alpha,\beta}_{\gamma}(x)
h_{i-1}(x) +\sum_{i=2}^{N+1}\partial_{N+i}L[\mathbf{y}]^{\alpha,\beta}_{\gamma}(x)
\opabi h_{i-1}(x)\Biggr]dx=0,
\end{equation}
and, for $j=1,\ldots,r$,
\begin{equation}
\label{nes:2}
\sum_{i=2}^{N+1}\partial_i G^j[\mathbf{y}]^{\alpha,\beta}_{\gamma}(x)h_{i-1}(x)
+\sum_{i=2}^{N+1}\partial_{N+i}G^j[\mathbf{y}]^{\alpha,\beta}_{\gamma}(x)
\opabi h_{i-1}(x)=0.
\end{equation}
Multiplying the $j$th equation of the system \eqref{nes:2} by the
unspecified function $\lambda_j(x)$, for all $j=1,\ldots,r$,
integrating with respect to $x$, and adding the left-hand sides
(all equal to zero for any choice of the $\lambda_j$) to the
integrand of \eqref{eq:FTL}, we obtain
\begin{equation*}
\begin{split}
&\int_a^b \left[\sum_{i=2}^{N+1}\left(\partial_iL[\mathbf{y}]^{\alpha,\beta}_{\gamma}(x)
+\sum_{j=1}^r\lambda_j(x)\partial_iG^j[\mathbf{y}]^{\alpha,\beta}_{\gamma}(x)\right)h_{i-1}(x)\right.\\
& \left. \quad +\sum_{i=2}^{N+1}\left(\partial_{N+i}L[\mathbf{y}]^{\alpha,\beta}_{\gamma}(x)
+\sum_{j=1}^r\lambda_j(x)\partial_{N+i}G^j[\mathbf{y}]^{\alpha,\beta}_{\gamma}(x)\right)(\opabi
h_{i-1}(x))\right]dx\\
&=\int_a^b\left[\sum_{i=2}^{N+1}\partial_i F[\mathbf{y},\mathbf{\lambda}]^{\alpha,\beta}_{\gamma}(x)
h_{i-1}(x) +\sum_{i=2}^{N+1}\partial_{N+i}F[\mathbf{y},\mathbf{\lambda}]^{\alpha,\beta}_{\gamma}(x)
(\opabi h_{i-1}(x))\right]dx\\
&=0,
\end{split}
\end{equation*}
where
$F[\mathbf{y},\mathbf{\lambda}]^{\alpha,\beta}_{\gamma}(x)
=L[\mathbf{y}]^{\alpha,\beta}_{\gamma}(x) +\sum_{j=1}^r\lambda(x)
G^j[\mathbf{y}]^{\alpha,\beta}_{\gamma}(x)$.
Integrating by parts,
\begin{equation}
\label{nes:3}
\int_a^b\left[\sum_{i=2}^{N+1}\partial_i F[\mathbf{y},\mathbf{\lambda}]^{\alpha,\beta}_{\gamma}(x)
+\opbaci \partial_{N+i} F[\mathbf{y},\mathbf{\lambda}]^{\alpha,\beta}_{\gamma}(x)\right] h_{i-1}(x)dx=0.
\end{equation}
Because of \eqref{nes:2}, we cannot regard the $N$ functions
$h_1,\ldots,h_N$ as being free for arbitrary choice. There is a
subset of $r$ of these functions whose assignment is restricted by
the assignment of the remaining $(N-r)$. We can assume, without loss
of generality, that $h_1,\ldots,h_r$ are the functions of the set
whose dependence upon the choice of the arbitrary
$h_{r+1},\ldots,h_N$ is governed by \eqref{nes:2}. We now assign the
functions $\lambda_1,\ldots,\lambda_r$ to be the set of $r$
functions that make vanish (for all $x$ between $a$ and $b$) the
coefficients of $h_1,\ldots,h_r$ in the integrand of \eqref{nes:3}.
That is, $\lambda_1,\ldots,\lambda_r$ are chosen so as to satisfy
\begin{equation}
\label{nes:4}
\partial_i F[\mathbf{y},\mathbf{\lambda}]^{\alpha,\beta}_{\gamma}(x)
+\opbaci\partial_{N+i} F[\mathbf{y},\mathbf{\lambda}]^{\alpha,\beta}_{\gamma}(x)=0,
\quad i=2,\ldots,r+1, \quad x\in[a,b].
\end{equation}
With this choice \eqref{nes:3} gives
\begin{equation*}
\int_a^b\left[\sum_{i=r+2}^{N+1}
\partial_i F[\mathbf{y},\mathbf{\lambda}]^{\alpha,\beta}_{\gamma}(x)
+\opbaci\partial_{N+i}
F[\mathbf{y},\mathbf{\lambda}]^{\alpha,\beta}_{\gamma}(x)\right] h_{i-1}(x)dx=0.
\end{equation*}
Since the functions $h_{r+1},\ldots,h_N$  are arbitrary, we may
employ the fundamental lemma of the calculus of variations to conclude that
\begin{equation}
\label{nes:6}
\partial_i F[\mathbf{y},\mathbf{\lambda}]^{\alpha,\beta}_{\gamma}(x)
+\opbaci\partial_{N+i}F[\mathbf{y},\mathbf{\lambda}]^{\alpha,\beta}_{\gamma}(x)=0,
\end{equation}
$i=r+1,\ldots,N+1$, for all $x\in[a,b]$.
\end{proof}

\begin{remark}
In order to determine the $(N+r)$ unknown functions
$y_1,\ldots,y_n$, $\lambda_1,\ldots,\lambda_r$, we must consider
the system of $(N+r)$ equations, consisting of \eqref{cons2},
\eqref{nes:4}, and \eqref{nes:6}, together with the $2N$ boundary
conditions \eqref{boun2}.
\end{remark}

Assume now that the constraints, instead of \eqref{cons2},
are characterized by inequalities:
\begin{equation*}
\label{cons:ineq}
G^j[\mathbf{y}]^{\alpha,\beta}_{\gamma}(x)\leq 0, \quad j=1,\ldots,r.
\end{equation*}
In this case we can set
\begin{equation*}
\label{cons:ineq:chan}
 G^j[\mathbf{y}]^{\alpha,\beta}_{\gamma}(x)+(\phi_j(x))^2=0, \quad j=1,\ldots,r,
\end{equation*}
where $\phi_j$ have the some continuity properties as $y_i$.
Therefore, we obtain the following problem:
\begin{equation}
\label{Funct2:ineq}
\hat{\mathcal{J}}(y)=\int_a^b \hat{L}(x,\mathbf{y}(x),
\opab\mathbf{y}(x), \phi_1(x),\ldots,\phi_r(x)) \, dx 
\longrightarrow \text{min}
\end{equation}
subject to $r$ independent constraints ($r<N$)
\begin{equation}
\label{cons2:ineq}
 G^j[\mathbf{y}]^{\alpha,\beta}_{\gamma}(x)+(\phi_j(x))^2=0, \quad j=1,\ldots,r,
\end{equation}
and boundary conditions \eqref{boun2}. Applying
Theorem~\ref{lagrange} we get the following result.

\begin{theorem}
\label{lagrange:ineq}
A set of functions $y_1,\ldots, y_N$, $\phi_1,\ldots,\phi_r$, which
is a solution to problem \eqref{Funct2:ineq}--\eqref{cons2:ineq},
satisfies, for suitably chosen $\lambda_j$, $j=1,\ldots,r$, the
following system of equations:
\begin{multline*}
\opbaci \partial_{N+i}\hat{F}(x,\mathbf{y}(x),\opab\mathbf{y}(x),\lambda_1(x),
\ldots,\lambda_r(x),\phi_1(x),\ldots,\phi_r(x))\\
+ \partial_i\hat{F}(x,\mathbf{y}(x),\opab\mathbf{y}(x),\lambda_1(x),
\ldots,\lambda_r(x),\phi_1(x),\ldots,\phi_r(x)) =0,
\end{multline*}
$i=2,\ldots, N+1$, where $\hat{F}=\hat{L}+\sum_{j=1}^r\lambda_j(G^j+\phi_j^2)$, and
$\lambda_j(x)\phi_j(x)=0$,  $j=1,\ldots,r$, hold for all $x\in[a,b]$.
\end{theorem}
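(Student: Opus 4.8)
The plan is to read the transformed problem \eqref{Funct2:ineq}--\eqref{cons2:ineq} as a single instance of the fractional Lagrange problem already treated in Theorem~\ref{lagrange}, but posed over the enlarged collection of dependent variables $(y_1,\ldots,y_N,\phi_1,\ldots,\phi_r)$. The equality constraints $\tilde{G}^j:=G^j[\mathbf{y}]^{\alpha,\beta}_{\gamma}(x)+(\phi_j(x))^2=0$ play the role of the constraints \eqref{cons2}, and the cost $\hat{\mathcal{J}}$ that of \eqref{Funct1}, for a system of now $N+r$ unknown functions subject to $r$ constraints. The one structural difference from the hypotheses of Theorem~\ref{lagrange} is that the slack functions $\phi_j$ enter both $\hat{L}$ and the $\tilde{G}^j$ only algebraically: no fractional derivative $\opab\phi_j$ ever appears. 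I would keep this feature in view throughout, since it is precisely what makes the $\phi_j$-components of the Euler--Lagrange system collapse to pointwise conditions.

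First I would check that the hypotheses of Theorem~\ref{lagrange} survive the enlargement. The requirement $r<N+r$ holds trivially. The independence of the constraints is inherited from the original problem: since adjoining $(\phi_j)^2$ does not affect any partial derivative with respect to the velocity-type arguments $p_{N+2},\ldots,p_{2N+1}$, the nonvanishing Jacobian determinant that guarantees independence of the $G^j$ serves verbatim for the $\tilde{G}^j$. Theorem~\ref{lagrange} then yields multipliers $\lambda_1(x),\ldots,\lambda_r(x)$ and the complete system of $N+r$ fractional Euler--Lagrange equations for the augmented Lagrangian $\hat{F}=\hat{L}+\sum_{j=1}^{r}\lambda_j(G^j+\phi_j^2)$.

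Next I would split those $N+r$ equations by the variable being differentiated. The equations attached to $y_1,\ldots,y_N$ are exactly the fractional Euler--Lagrange equations $\partial_i\hat{F}+\opbaci\partial_{N+i}\hat{F}=0$, $i=2,\ldots,N+1$, the first family asserted in the theorem. For the equations attached to the slack variables $\phi_j$, the key observation is that $\hat{F}$ is independent of $\opab\phi_j$, so the fractional-derivative term in the corresponding Euler--Lagrange equation vanishes identically and the equation degenerates to the purely algebraic relation $\partial_{\phi_j}\hat{F}=0$. Since the cost functional is unchanged by the introduction of the slack variables, $\hat{L}$ coincides with the original Lagrangian $L$ and so does not depend on the $\phi_j$; hence this reduces to $2\lambda_j(x)\phi_j(x)=0$, that is, the complementarity condition $\lambda_j(x)\phi_j(x)=0$, $j=1,\ldots,r$, for every $x\in[a,b]$.

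The delicate point, and the one I would treat most carefully, is the legitimacy of processing a variable that carries no fractional derivative inside the $\opab$ Euler--Lagrange machinery. This is cleanest to justify by returning to the first-variation computation underlying Theorem~\ref{lagrange}: when the variation is taken in a $\phi_j$ direction $\eta_j$, no integration by parts through formula \eqref{byparts} is invoked because $\opab\phi_j$ is absent, so after adjoining the multipliers the stationarity of the augmented functional against arbitrary $\eta_j$ forces $\partial_{\phi_j}\hat{F}=0$ directly by the fundamental lemma of the calculus of variations. With the two families of conditions thus established for all $x\in[a,b]$, the proof would be complete.
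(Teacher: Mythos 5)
Your proposal is correct and follows essentially the same route as the paper, whose proof consists precisely of applying Theorem~\ref{lagrange} to the enlarged problem in the variables $(y_1,\ldots,y_N,\phi_1,\ldots,\phi_r)$; you simply spell out the details the paper leaves implicit (inheritance of the independence condition, and the collapse of the $\phi_j$-equations to $2\lambda_j(x)\phi_j(x)=0$ because no fractional derivative of $\phi_j$ occurs and $\hat{L}$ does not genuinely depend on the $\phi_j$). No gaps.
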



\subsection{Sufficient condition of optimality}
\label{ssec:suf}

In this section we provide sufficient optimality conditions for the
elementary and the isoperimetric problem of the $\opab$ fractional
calculus of variations. Similarly to what happens in the classical
calculus of variations, some conditions of convexity are in order.

\begin{definition}
Given a function $f\in C^1([a,b]\times\mathbb{R}^{2N}; \mathbb{R})$,
we say that $f(\underline x,\mathbf{y},\mathbf{v})$ is jointly
convex in $(\mathbf{y},\mathbf{v})$, if
\begin{equation*}
f(x,\mathbf{y}+\mathbf{y}^0,\mathbf{v}+\mathbf{v}^0)-f(x,\mathbf{y},\mathbf{v})
\geq \sum_{i=2}^{N+1}\partial_i
f(x,\mathbf{y},\mathbf{v})y_{i-1}^0
+\sum_{i=2}^{N+1}\partial_{N+i}f(x,\mathbf{y},\mathbf{v})v_{i-1}^0
\end{equation*}
for all
$(x,\mathbf{y},\mathbf{v})$,$(x,\mathbf{y}+\mathbf{y}^0,\mathbf{v}
+\mathbf{v}^0)\in [a,b]\times\mathbb{R}^{2N}$.
\end{definition}

\begin{theorem}
\label{suff}
Let $L(\underline x,\mathbf{y},\mathbf{v})$ be jointly convex in
$(\mathbf{y},\mathbf{v})$. If $\mathbf{y}$ satisfies the system of
$N$ fractional Euler--Lagrange equations \eqref{E-L1}, then
$\mathbf{y}$ is a global minimizer to problem
\eqref{Funct1}--\eqref{boun2}.
\end{theorem}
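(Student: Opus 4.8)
The plan is to exploit the convexity hypothesis directly, comparing an arbitrary admissible competitor with the Euler--Lagrange solution, and to show the difference of cost functionals is nonnegative. First I would take any admissible $\mathbf{y}+\mathbf{h}$, meaning $\mathbf{h}\in\mathbf{D}$ with $h_i(a)=h_i(b)=0$ for $i=1,\ldots,N$, and write the difference
\begin{equation*}
\mathcal{J}(\mathbf{y}+\mathbf{h})-\mathcal{J}(\mathbf{y})
=\int_a^b\left[L[\mathbf{y}+\mathbf{h}]^{\alpha,\beta}_{\gamma}(x)
-L[\mathbf{y}]^{\alpha,\beta}_{\gamma}(x)\right]dx.
\end{equation*}
Because the operator $\opab$ is linear, the fractional derivative of $\mathbf{y}+\mathbf{h}$ is $\opab\mathbf{y}+\opab\mathbf{h}$, so the integrand is exactly of the form $L(x,\mathbf{y}+\mathbf{h},\mathbf{v}+\mathbf{w})-L(x,\mathbf{y},\mathbf{v})$ with $\mathbf{v}=\opab\mathbf{y}(x)$ and $\mathbf{w}=\opab\mathbf{h}(x)$. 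The joint-convexity definition applied pointwise then gives the lower bound
\begin{equation*}
\mathcal{J}(\mathbf{y}+\mathbf{h})-\mathcal{J}(\mathbf{y})
\geq\int_a^b\left[\sum_{i=2}^{N+1}\partial_i L[\mathbf{y}]^{\alpha,\beta}_{\gamma}(x)h_{i-1}(x)
+\sum_{i=2}^{N+1}\partial_{N+i}L[\mathbf{y}]^{\alpha,\beta}_{\gamma}(x)\,\opabi h_{i-1}(x)\right]dx.
\end{equation*}

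Next I would transform the right-hand side so the Euler--Lagrange equations can be invoked. The second sum still carries $\opabi h_{i-1}$, so I would apply the fractional integration-by-parts formula \eqref{byparts} to each term, exactly as in the proof of Theorem~\ref{Theo E-L1}. This rewrites the integral as
\begin{equation*}
\int_a^b\sum_{i=2}^{N+1}\left[\partial_i L[\mathbf{y}]^{\alpha,\beta}_{\gamma}(x)
+\opbaci\partial_{N+i}L[\mathbf{y}]^{\alpha,\beta}_{\gamma}(x)\right]h_{i-1}(x)\,dx
\end{equation*}
plus boundary terms. The boundary terms are precisely the $\gamma$ and $(1-\gamma)$ bracketed expressions evaluated at $x=a,b$ appearing in \eqref{eq:aft:IP}, and they vanish because $h_{i-1}(a)=h_{i-1}(b)=0$. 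The remaining bracket is the left-hand side of the Euler--Lagrange system \eqref{E-L1}, which $\mathbf{y}$ satisfies by hypothesis, so the entire integral is zero. Hence $\mathcal{J}(\mathbf{y}+\mathbf{h})-\mathcal{J}(\mathbf{y})\geq 0$ for every admissible $\mathbf{h}$, which is exactly the assertion that $\mathbf{y}$ is a global minimizer.

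The main obstacle, such as it is, lies in justifying the integration-by-parts step rigorously: the formula \eqref{byparts} requires that the relevant integrability and differentiability hypotheses hold so that the fractional derivatives $\opbaci\partial_{N+i}L$ are well defined and the boundary evaluation of the one-sided fractional integrals $\intrai$ and $\intlbi$ makes sense. In the smooth setting guaranteed by $L\in C^1$ and $\mathbf{y},\mathbf{h}\in\mathbf{D}$ these conditions are met, mirroring the hypotheses already used implicitly in Theorem~\ref{Theo E-L1}; so I would simply note that the boundary contributions drop out under $\mathbf{h}(a)=\mathbf{h}(b)=\mathbf{0}$ and that the global (not merely local) nature of the conclusion is a direct consequence of the convexity inequality holding for \emph{all} admissible $\mathbf{h}$, with no smallness restriction on $\mathbf{h}$.
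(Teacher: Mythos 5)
Your proposal is correct and is exactly the standard argument the paper points to (its own ``proof'' merely defers to Theorem~3.3 of \cite{MalTor}, which proceeds the same way): apply the joint-convexity inequality pointwise to an arbitrary admissible competitor $\mathbf{y}+\mathbf{h}$, integrate the linear lower bound by parts via \eqref{byparts}, kill the boundary terms with $\mathbf{h}(a)=\mathbf{h}(b)=\mathbf{0}$, and invoke \eqref{E-L1} to conclude the remaining integral vanishes. No gaps.
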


\begin{proof}
The proof is similar to the proof of Theorem~3.3 in \cite{MalTor}.
\end{proof}

\begin{theorem}
\label{suff:iso}
Let $F(\underline
x,\mathbf{y},\mathbf{v},\bar{\mathbf{\lambda}})=L(\underline
x,\mathbf{y},\mathbf{v})-\sum_{j=1}^{r}\bar{\lambda}_{j}G^{j}(\underline
x,\mathbf{y},\mathbf{v})$ be jointly convex in
$(\mathbf{y},\mathbf{v})$, for some constants
$\bar{\lambda}_{j}\in\mathbb{R}$, $j=1,\ldots,r$. If $\mathbf{y}^0$
satisfies the system of $N$ fractional Euler--Lagrange equations
\eqref{ele}, then $\mathbf{y}^0$ is a minimizer to the isoperimetric problem
defined by \eqref{Funct1},\eqref{boun2} and \eqref{cons2:iso}.
\end{theorem}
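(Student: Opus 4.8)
The plan is to reduce the isoperimetric sufficiency statement to the elementary one, Theorem~\ref{suff}, by exploiting the fact that on the set of admissible trajectories the penalized functional $\mathcal{F}:=\mathcal{J}-\sum_{j=1}^r\bar\lambda_j\mathcal{G}^j$ differs from $\mathcal{J}$ only by the \emph{additive constant} $\sum_{j=1}^r\bar\lambda_j l_j$. Indeed, every admissible competitor satisfies the isoperimetric constraints \eqref{cons2:iso}, so $\mathcal{G}^j(\mathbf{y})=l_j$ for each $j$, and minimizing $\mathcal{J}$ over the admissible set is equivalent to minimizing $\mathcal{F}$ over the same set. Since the integrand $F$ of $\mathcal{F}$ is, by hypothesis, jointly convex in $(\mathbf{y},\mathbf{v})$ with the \emph{fixed} constants $\bar\lambda_j$, and $\mathbf{y}^0$ solves the Euler--Lagrange system \eqref{ele}, which is precisely the Euler--Lagrange system attached to $\mathcal{F}$, the convexity argument behind Theorem~\ref{suff} transfers almost verbatim.

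Concretely, I would first fix an arbitrary admissible trajectory $\hat{\mathbf{y}}$ and write $\hat{\mathbf{y}}=\mathbf{y}^0+\mathbf{h}$. Because both $\hat{\mathbf{y}}$ and $\mathbf{y}^0$ obey the same boundary conditions \eqref{boun2}, the variation satisfies $h_i(a)=h_i(b)=0$ for $i=1,\ldots,N$. Using the constant-difference observation, $\mathcal{J}(\hat{\mathbf{y}})-\mathcal{J}(\mathbf{y}^0)=\mathcal{F}(\hat{\mathbf{y}})-\mathcal{F}(\mathbf{y}^0)=\int_a^b\left[{_{\bar\lambda}}F\{\mathbf{y}^0+\mathbf{h}\}^{\alpha,\beta}_\gamma(x)-{_{\bar\lambda}}F\{\mathbf{y}^0\}^{\alpha,\beta}_\gamma(x)\right]dx$. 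Applying the defining inequality of joint convexity pointwise in $x$ then bounds this difference below by $\int_a^b\sum_{i=2}^{N+1}\left[\partial_i{_{\bar\lambda}}F\{\mathbf{y}^0\}^{\alpha,\beta}_\gamma(x)\,h_{i-1}(x)+\partial_{N+i}{_{\bar\lambda}}F\{\mathbf{y}^0\}^{\alpha,\beta}_\gamma(x)\,\opabi h_{i-1}(x)\right]dx$.

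Next I would transform the second group of terms with the fractional integration-by-parts formula \eqref{byparts}; as in the derivation of \eqref{eq:aft:IP}, every boundary contribution carries a factor $h_{i-1}(a)$ or $h_{i-1}(b)$ and hence vanishes. Regrouping what remains leaves $\int_a^b\sum_{i=2}^{N+1}\left[\partial_i{_{\bar\lambda}}F\{\mathbf{y}^0\}^{\alpha,\beta}_\gamma(x)+\opbaci\partial_{N+i}{_{\bar\lambda}}F\{\mathbf{y}^0\}^{\alpha,\beta}_\gamma(x)\right]h_{i-1}(x)\,dx$, which equals zero because $\mathbf{y}^0$ satisfies \eqref{ele}. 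Therefore $\mathcal{J}(\hat{\mathbf{y}})\ge\mathcal{J}(\mathbf{y}^0)$ for every admissible $\hat{\mathbf{y}}$, that is, $\mathbf{y}^0$ is a global minimizer over the admissible set.

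The only genuinely delicate point, and the step I would write most carefully, is the reduction: one must use that the comparison trajectory meets \emph{exactly the same} constraint levels $l_j$ as $\mathbf{y}^0$, so that the multiplier term is truly constant and the fixed constants $\bar\lambda_j$ can be carried unchanged through the convexity estimate. In contrast to the necessary condition in Theorem~\ref{Th:B:EL-CV}, no rank or regularity hypothesis on the constraints is needed here, since the multipliers $\bar\lambda_j$ realizing joint convexity are supplied as part of the hypothesis; everything past the reduction is a direct transcription of the proof of Theorem~\ref{suff} with $L$ replaced by $F$.
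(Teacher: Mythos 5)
Your proposal is correct and follows essentially the same route as the paper: both arguments pass to the augmented functional $\mathcal{F}=\mathcal{J}-\sum_{j=1}^{r}\bar{\lambda}_{j}\mathcal{G}^{j}$, use the joint convexity of $F$ together with the Euler--Lagrange system \eqref{ele} to conclude that $\mathbf{y}^0$ globally minimizes $\mathcal{F}$ among trajectories satisfying \eqref{boun2}, and then observe that on the admissible set the multiplier term equals the constant $\sum_{j}\bar{\lambda}_{j}l_{j}$. The only difference is that the paper invokes Theorem~\ref{suff} as a black box for the first step, whereas you inline its convexity-plus-integration-by-parts argument.
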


\begin{proof}
By Theorem~\ref{suff}, $\mathbf{y}^0$ minimizes
$\int_a^b F _{\bar{\lambda}}\{\mathbf{y}\}^{\alpha,\beta}_{\gamma}(x) \, dx$. That is, for all
functions satisfying condition \eqref{boun2} we have
\begin{multline*}
\int_a^b L[\mathbf{y}]^{\alpha,\beta}_{\gamma}(x)\,
dx-\sum_{j=1}^{r}\bar{\lambda}_{j}\int_a^bG^{j}[\mathbf{y}]^{\alpha,\beta}_{\gamma}(x)\, dx\\
\geq \int_a^b L[\mathbf{y}^0]^{\alpha,\beta}_{\gamma}(x)\,
dx-\sum_{j=1}^{r}\bar{\lambda}_{j}\int_a^bG^{j}[\mathbf{y}^0]^{\alpha,\beta}_{\gamma}(x)\, dx.
\end{multline*}
Restricting to the isoperimetric constraints \eqref{cons2:iso}, we obtain that
\begin{equation*}
\int_a^b L[\mathbf{y}]^{\alpha,\beta}_{\gamma}(x)\, dx
-\sum_{j=1}^{r}\bar{\lambda}_{j}l_{j}\,
dx\geq \int_a^b L[\mathbf{y}^0]^{\alpha,\beta}_{\gamma}(x)\, dx
-\sum_{j=1}^{r}\bar{\lambda}_{j}l_{j}\, dx.
\end{equation*}
Therefore,
\begin{equation*}
\int_a^b L[\mathbf{y}]^{\alpha,\beta}_{\gamma}(x)\, dx
\geq \int_a^b L[\mathbf{y}^0]^{\alpha,\beta}_{\gamma}(x)\, dx
\end{equation*}
as desired.
\end{proof}

Choosing $r=1$ in Theorem~\ref{suff:iso} one can easily obtain
\cite[Theorem~3.10]{AlmeidaTorres}.


\section{Multiobjective fractional optimization}
\label{main:2}

Multiobjective optimization is a natural extension of the
traditional optimization of a single-objective function. If the
objective functions are commensurate, minimizing one-objective
function minimizes all criteria and the problem can be solved using
tradicional optimization techniques. However, if the objective
functions are incommensurate, or competing, then the minimization of
one objective function requires a compromise in another objective.
Here we consider multiobjective fractional
variational problems with a finite number
$d\geq 1$ of objective (cost) functionals
\begin{equation}
\label{Funct:mul}
\left(\mathcal{J}^1(\mathbf{y}),\ldots,\mathcal{J}^d(\mathbf{y})\right)
=\left(\int_a^b L^1[\mathbf{y}]^{\alpha,\beta}_{\gamma}(x) \, dx ,
\ldots, \int_a^b L^d[\mathbf{y}]^{\alpha,\beta}_{\gamma}(x) \, dx \right)\longrightarrow \min
\end{equation}
subject to the boundary conditions
\begin{equation}
\label{boun1:mul}
\mathbf{y}(a)=\mathbf{y}^{a},
\quad \mathbf{y}(b)=\mathbf{y}^{b},
\end{equation}
$\mathbf{y}^{a},\mathbf{y}^{b}\in \R^N$,
and $r$ ($r<N$) independent constraints
\begin{equation}
\label{cons:mul}
G^j[\mathbf{y}]^{\alpha,\beta}_{\gamma}(x)\leq 0, \quad j=1,\ldots,r,
\end{equation}
where $L^i,G^j\in C^1([a,b]\times\mathbb{R}^{2N}; \mathbb{R})$,
$i=1,\ldots,d$, $j=1,\ldots,r$. We would like to find a function
$\mathbf{y}\in \mathbf{D}$, satisfying constraints \eqref{boun1:mul}
and \eqref{cons:mul}, that renders the minimum value to each
functional $\mathcal{J}^i$, $i=1,\ldots, d$, simultaneously. The
competition between objectives gives rise to the necessity
of distinguish between the difference of multiobjective optimization
and traditional single-objective optimization.
Competition causes the lack of complete order
for multiobjective optimization problems. The concept of
Pareto optimality is therefore used to characterize a solution
to the multiobjective optimization problem. For the usefulness
of variational analysis and Pareto optimal allocations
in welfare economics, we refer the reader to \cite{Boris2005}.
We define
\begin{equation*}
\mathcal{E}:=\{\mathbf{y}\in \mathbf{D}:\mathbf{y} \mbox{ satisyies
conditions \eqref{boun1:mul} and \eqref{cons:mul}}\}.
\end{equation*}

\begin{definition}
\label{def:paret}
A function $\bar{\mathbf{y}}\in \mathcal{E}$ is called a Pareto
optimal solution to problem \eqref{Funct:mul}--\eqref{cons:mul}
if does not exist $\mathbf{y}\in \mathcal{E}$ with
\begin{equation*}
\forall
i\in\{1,\ldots,d\}:\mathcal{J}^i(\mathbf{y})\leq\mathcal{J}^i(\bar{\mathbf{y}})
\quad \wedge \quad \exists
i\in\{1,\ldots,d\}:\mathcal{J}^i(\mathbf{y})<\mathcal{J}^i(\bar{\mathbf{y}}).
\end{equation*}
\end{definition}

Definition~\ref{def:paret} introduces the notion of
\emph{global Pareto optimality}.
Another important concept is the one of \emph{local Pareto optimality}.

\begin{definition}
\label{def:locparet}
A function $\bar{\mathbf{y}}\in \mathcal{E}$ is called a local
Pareto optimal solution to problem
\eqref{Funct:mul}--\eqref{cons:mul} if there exists $\delta >0$
for which does not exist $\mathbf{y}\in \mathcal{E}$ with
$\|\mathbf{y}-\bar{\mathbf{y}}\|_{1,\infty}<\delta$ and
\begin{equation*}
\forall
i\in\{1,\ldots,d\}:\mathcal{J}^i(\mathbf{y})\leq\mathcal{J}^i(\bar{\mathbf{y}})\quad
\wedge \quad \exists
i\in\{1,\ldots,d\}:\mathcal{J}^i(\mathbf{y})<\mathcal{J}^i(\bar{\mathbf{y}}).
\end{equation*}
\end{definition}

Naturally, any global Pareto optimal solution is locally Pareto optimal.
For enhanced notions of Pareto optimality
of constrained multiobjective problems,
the reader is referred to \cite{Boris2010}.


\subsection{Fractional Pareto optimality conditions}
\label{sec:par:op}

We obtain a sufficient condition for Pareto optimality by modifying the
original multiobjective fractional problem
\eqref{Funct:mul}--\eqref{cons:mul} into the following weighting
problem:
\begin{equation}
\label{wei:prob} \sum_{i=1}^{d}w_{i}\int_a^b L^i[\mathbf{y}]^{\alpha,\beta}_{\gamma}(x) \,
dx \longrightarrow \min
\end{equation}
subject to $\mathbf{y}\in\mathcal{E}$, where $w_{i}\geq 0$ for all
$i=1,\ldots,d$, and $\sum_{i=1}^{d}w_{i}=1$.

\begin{theorem}
\label{pareto:suf}
The solution of the weighting problem \eqref{wei:prob} is Pareto
optimal if the weighting coefficients are positive, that is,
$w_{i}>0$ for all $i=1,\ldots,d$. Moreover, the unique solution of
the weighting problem \eqref{wei:prob} is Pareto optimal.
\end{theorem}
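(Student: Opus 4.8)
The plan is to prove both assertions by contradiction, playing the definition of Pareto optimality (Definition~\ref{def:paret}) against the minimality of the weighting solution in \eqref{wei:prob}. For brevity I would write $\mathcal{J}^i(\mathbf{y}) = \int_a^b L^i[\mathbf{y}]^{\alpha,\beta}_{\gamma}(x)\, dx$ for the $i$th objective, so that the weighting problem reads $\sum_{i=1}^d w_i \mathcal{J}^i(\mathbf{y}) \to \min$ over $\mathbf{y} \in \mathcal{E}$, with $w_i \geq 0$ and $\sum_{i=1}^d w_i = 1$. The entire argument rests only on the order properties of a nonnegatively weighted sum; the fractional structure of the functionals is irrelevant.

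For the first assertion, I would let $\bar{\mathbf{y}} \in \mathcal{E}$ be a solution of \eqref{wei:prob} with every $w_i > 0$, and assume toward a contradiction that $\bar{\mathbf{y}}$ is not Pareto optimal. By Definition~\ref{def:paret} there is then a competitor $\mathbf{y} \in \mathcal{E}$ satisfying $\mathcal{J}^i(\mathbf{y}) \leq \mathcal{J}^i(\bar{\mathbf{y}})$ for all $i$ and $\mathcal{J}^k(\mathbf{y}) < \mathcal{J}^k(\bar{\mathbf{y}})$ for at least one index $k$. Multiplying each inequality by its positive weight and summing, the strict inequality at index $k$ (where $w_k > 0$) survives the summation, yielding $\sum_{i=1}^d w_i \mathcal{J}^i(\mathbf{y}) < \sum_{i=1}^d w_i \mathcal{J}^i(\bar{\mathbf{y}})$. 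This contradicts the minimality of $\bar{\mathbf{y}}$ in \eqref{wei:prob}, so $\bar{\mathbf{y}}$ must be Pareto optimal.

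For the second assertion, I would take $\bar{\mathbf{y}}$ to be the \emph{unique} solution of \eqref{wei:prob}, now only assuming $w_i \geq 0$, and again suppose it fails to be Pareto optimal, producing the same competitor $\mathbf{y} \in \mathcal{E}$. Since some weights may vanish, summing the componentwise inequalities gives only $\sum_{i=1}^d w_i \mathcal{J}^i(\mathbf{y}) \leq \sum_{i=1}^d w_i \mathcal{J}^i(\bar{\mathbf{y}})$. The reverse inequality holds by optimality of $\bar{\mathbf{y}}$, so the two weighted sums coincide and $\mathbf{y}$ is itself a minimizer of \eqref{wei:prob}. Uniqueness then forces $\mathbf{y} = \bar{\mathbf{y}}$, contradicting $\mathcal{J}^k(\mathbf{y}) < \mathcal{J}^k(\bar{\mathbf{y}})$. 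Hence $\bar{\mathbf{y}}$ is Pareto optimal.

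I do not expect any serious obstacle, as both parts are elementary order arguments. The only delicate point is the second assertion: because a vanishing weight $w_k$ can annihilate the strict improvement in objective $k$, one cannot deduce a strict decrease of the weighted sum as in the first part, and must instead exploit the uniqueness hypothesis to identify the competitor with $\bar{\mathbf{y}}$ and thereby reach the contradiction.
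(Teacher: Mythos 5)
Your proposal is correct and follows essentially the same route as the paper: both parts argue by contradiction through the weighted sum, with positivity of the weights preserving the strict inequality in the first part and uniqueness of the minimizer delivering the contradiction in the second. Your write-up of the second part merely spells out the intermediate step (the competitor is also a minimizer, hence equals $\bar{\mathbf{y}}$) that the paper's proof leaves implicit.
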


\begin{proof}
Let $\bar{\mathbf{y}}\in \mathcal{E}$ be a solution to problem
\eqref{wei:prob} with $w_{i}>0$ for all $i=1,\ldots,d$. Suppose that
$\bar{\mathbf{y}}$ is not Pareto optimal. Then, there exists
$\mathbf{y}$ such that
$\mathcal{J}^i(\mathbf{y})\leq\mathcal{J}^i(\bar{\mathbf{y}})$ for
all $i=1,\ldots,d$ and
$\mathcal{J}^j(\mathbf{y})<\mathcal{J}^j(\bar{\mathbf{y}})$ for at
least one $j$. Since $w_{i}>0$ for all $i=1,\ldots,d$, we have
$\sum_{i=1}^{d}w_i\mathcal{J}^i(\mathbf{y})<\sum_{i=1}^{d}w_i\mathcal{J}^i(\bar{\mathbf{y}})$.
This contradicts the minimality of $\bar{\mathbf{y}}$. Now, let
$\bar{\mathbf{y}}$ be the unique solution to \eqref{wei:prob}. If
$\bar{\mathbf{y}}$ is not Pareto optimal, then
$\sum_{i=1}^{d}w_i\mathcal{J}^i(\mathbf{y})\leq\sum_{i=1}^{d}w_i\mathcal{J}^i(\bar{\mathbf{y}})$.
This contradicts the uniqueness of $\bar{\mathbf{y}}$.
\end{proof}

Therefore, by varying the weights over the unit simplex
$\{w=(w_1,\ldots,w_d): w_i\geq 0, \sum_{i=1}^{d}w_{i}=1\}$ ones
obtains, in principle, different Pareto optimal solutions. The next
theorem provides a necessary and sufficient condition for Pareto
optimality. The result is analogous to that valid
for the finite dimensional case (see, \textrm{e.g.},
Chapter~3.1 and Chapter~3.3 of \cite{Miet}).

\begin{theorem}
\label{pareto:nes:suf}
A function $\bar{\mathbf{y}}\in \mathcal{E}$ is Pareto optimal
to problem \eqref{Funct:mul}--\eqref{cons:mul} if and only
if it is a solution to the scalar fractional variational problem
\begin{equation*}
\int_a^b L^i[\mathbf{y}]^{\alpha,\beta}_{\gamma}(x) \, dx \longrightarrow \min
\end{equation*}
subject to $\mathbf{y}\in \mathcal{E}$ and
\begin{equation*}
\int_a^b L^j[\mathbf{y}]^{\alpha,\beta}_{\gamma}(x) \, dx
\leq \int_a^b L^j[\bar{\mathbf{y}}]^{\alpha,\beta}_{\gamma}(x) \, dx,
\quad j=1,\ldots,d,\quad j\neq i,
\end{equation*}
for each $i=1,\ldots,d$.
\end{theorem}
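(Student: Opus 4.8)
The plan is to prove both implications separately, since this is an if-and-only-if statement characterizing Pareto optimality via a family of $\varepsilon$-constraint scalar problems (one for each index $i$). The structure mirrors the classical finite-dimensional result of Miettinen, which the paper explicitly cites, so the argument should be essentially a direct transcription into the variational setting where the decision variable is a trajectory $\mathbf{y}\in\mathcal{E}$ rather than a point in $\R^n$.

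First I would prove the easier direction: if $\bar{\mathbf{y}}$ is Pareto optimal, then it solves each scalar problem. I would argue by contradiction. Fix an index $i$ and suppose $\bar{\mathbf{y}}$ is \emph{not} a solution of the $i$th scalar problem. Then there exists a feasible $\mathbf{y}\in\mathcal{E}$ satisfying the constraints $\mathcal{J}^j(\mathbf{y})\leq\mathcal{J}^j(\bar{\mathbf{y}})$ for all $j\neq i$, yet with a strictly smaller objective value $\mathcal{J}^i(\mathbf{y})<\mathcal{J}^i(\bar{\mathbf{y}})$. But these two facts together are exactly the definition of $\mathbf{y}$ dominating $\bar{\mathbf{y}}$ in the sense of Definition~\ref{def:paret}, contradicting Pareto optimality. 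Since $i$ was arbitrary, $\bar{\mathbf{y}}$ solves every scalar problem.

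For the converse, suppose $\bar{\mathbf{y}}$ solves the scalar problem for \emph{each} $i=1,\ldots,d$, and again argue by contradiction: assume $\bar{\mathbf{y}}$ is not Pareto optimal. Then by Definition~\ref{def:paret} there is a feasible $\mathbf{y}\in\mathcal{E}$ with $\mathcal{J}^j(\mathbf{y})\leq\mathcal{J}^j(\bar{\mathbf{y}})$ for all $j$ and with strict inequality $\mathcal{J}^k(\mathbf{y})<\mathcal{J}^k(\bar{\mathbf{y}})$ for at least one index $k$. I would now single out that index $k$ and observe that $\mathbf{y}$ is feasible for the $k$th scalar problem (the constraints $\mathcal{J}^j(\mathbf{y})\leq\mathcal{J}^j(\bar{\mathbf{y}})$, $j\neq k$, are satisfied), while achieving a strictly smaller value of $\mathcal{J}^k$ than $\bar{\mathbf{y}}$. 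This contradicts the assumption that $\bar{\mathbf{y}}$ minimizes the $k$th scalar problem, completing the proof.

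The argument is essentially combinatorial bookkeeping with the definition of dominance, so I do not anticipate a genuine analytic obstacle; the only point requiring care is the logical matching in the converse, namely ensuring that the \emph{same} dominating $\mathbf{y}$ simultaneously witnesses feasibility for the $k$th scalar problem and violates its optimality. The subtlety is that Pareto optimality failing produces a single dominating $\mathbf{y}$, and one must correctly route its strict-improvement index $k$ into the corresponding scalar problem rather than an arbitrary one. No convexity, differentiability, or fractional-calculus machinery is needed here—unlike the sufficiency result of Theorem~\ref{pareto:suf}, this equivalence is purely order-theoretic and holds for any feasible set and any family of functionals.
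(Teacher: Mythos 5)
Your proposal is correct and follows essentially the same argument as the paper: both directions are handled by contradiction, using the observation that a dominating trajectory for the Pareto problem is exactly a feasible improvement for the $k$th scalar ($\varepsilon$-constraint) problem at the strict-improvement index $k$, and vice versa. The paper's proof is the same order-theoretic bookkeeping, merely packaged via the constraint sets $\mathcal{C}_k$.
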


\begin{proof}
Suppose that $\bar{\mathbf{y}}$ is Pareto optimal. Then
$\bar{\mathbf{y}}\in \mathcal{C}_k=\{\mathbf{y}\in \mathcal{E}:
\mathcal{J}^j(\mathbf{y})\leq\mathcal{J}^j(\bar{\mathbf{y}}),j=1,\ldots,d,j\neq
k\}$ for all $k$, so $\mathcal{C}_k\neq \emptyset$. If
$\bar{\mathbf{y}}$ does not minimize $\mathcal{J}^k(\mathbf{y})$ on
the constrained set $\mathcal{C}_k$ for some $k$, then there exists
$\mathbf{y}\in \mathcal{E}$ such that
$\mathcal{J}^k(\mathbf{y})<\mathcal{J}^k(\bar{\mathbf{y}})$ and
$\mathcal{J}^j(\mathbf{y})\leq\mathcal{J}^j(\bar{\mathbf{y}})$ for
all $j\neq k$. This contradicts the Pareto optimality of
$\bar{\mathbf{y}}$. Now, suppose that $\bar{\mathbf{y}}$ minimize
each $\mathcal{J}^k(\mathbf{y})$ on the constrained set
$\mathcal{C}_k$. If $\bar{\mathbf{y}}$ is not Pareto optimal, then
there exists $\mathbf{y}$ such that
$\mathcal{J}^i(\mathbf{y})\leq\mathcal{J}^i(\bar{\mathbf{y}})$ for
all $i=1,\ldots,d$ and
$\mathcal{J}^j(\mathbf{y})<\mathcal{J}^j(\bar{\mathbf{y}})$ for at
least one $j$. This contradicts the minimality of $\mathbf{y}$ for
$\mathcal{J}^j(\mathbf{y})$ on $\mathcal{C}_j$.
\end{proof}

\begin{remark}
\label{pareto:nes}
For a function $\mathbf{y}\in \mathcal{E}$ to be Pareto optimal
to problem \eqref{Funct:mul}--\eqref{cons:mul}, it is
necessary to be a solution to the fractional isoperimetric problems
\begin{equation*}
\int_a^b L^i[\mathbf{y}]^{\alpha,\beta}_{\gamma}(x) \, dx \longrightarrow \min
\end{equation*}
subject to $\mathbf{y}\in \mathcal{E}$ and
\begin{equation*}
\int_a^b L^j[\mathbf{y}]^{\alpha,\beta}_{\gamma}(x) \, dx
= \int_a^b L^j[\bar{\mathbf{y}}]^{\alpha,\beta}_{\gamma}(x) \, dx,
\quad j=1,\ldots,d,\quad j\neq i,
\end{equation*}
for all $i=1,\ldots,d$. Therefore, necessary optimality conditions
for the fractional isoperimetric problems (see Theorem~\ref{Th:B:EL-CV})
are also necessary for fractional Pareto optimality.
\end{remark}


\subsection{Examples}
\label{sec:ex}

We illustrate our results with two multiobjective
fractional variational problems.

\begin{example}
Let $\bar{y}(x)=E_{\alpha}(x^{\alpha})$, $x\in [0,1]$, where
$E_{\alpha}$ is the Mittag--Leffler function:
$$
E_{\alpha}(z)=\sum_{k=1}^{\infty}\frac{z^k}{\Gamma(\alpha k+1)},
\quad z\in\mathbb{R}, \quad \alpha >0.
$$
When $\alpha =1$, the Mittag--Leffler function
is simply the exponential function: $E_1(x)=\mathrm{e}^x$.
We note that the left Caputo fractional derivative of
$\bar{y}$ is $\bar{y}$ (\textrm{cf.} \cite{kilbas}, p.~98):
$$
{^C_0D_x^\alpha}\bar{y}(x)=\bar{y}(x).
$$
Consider the following multiobjective fractional variational problem
($N=1$, $\gamma=1$, and $d=2$):
\begin{equation}
\label{ex:1:pr}
\left(\mathcal{J}^1(y),\mathcal{J}^2(y)\right)
=\left(\int_0^1 ({^C_0D_x^\alpha}y(x))^2 \, dx ,
\int_0^1 \bar{y}(x){^C_0D_x^\alpha}y(x)  \, dx \right)\longrightarrow \min
\end{equation}
subject to
\begin{equation}
\label{ex:1:pr:bc}
y(0)=0,\quad y(1)=E_{\alpha}(1).
\end{equation}
Observe that $\bar{y}$ satisfies the necessary Pareto optimality
conditions (see Remark~\ref{pareto:nes}). Indeed, as shown in
\cite[Example~1]{AlmeidaTorres}, $\bar{y}$ is a solution to the
isoperimetric problem
\begin{equation*}
\mathcal{J}^1(y)=\int_0^1 ({^C_0D_x^\alpha}y(x))^2 \, dx \longrightarrow \min
\end{equation*}
subject to
\begin{equation*}
\int_0^1 \bar{y}(x){^C_0D_x^\alpha}y(x) \, dx =\int_0^1
(\bar{y}(x))^2 \, dx.
\end{equation*}
Consider now the following fractional isoperimetric problem:
\begin{equation*}
\mathcal{J}^2(y)=\int_0^1 \bar{y}(x){^C_0D_x^\alpha}y(x)\, dx \longrightarrow \min
\end{equation*}
subject to
\begin{equation*}
\int_0^1 ({^C_0D_x^\alpha}y(x))^2  \, dx =\int_0^1 ({^C_0D_x^\alpha}
\bar{y}(x))^2 \, dx.
\end{equation*}
Let us apply Theorem~\ref{nes:iso}. The equality ${_xD_1^\alpha}y(x)=0$
holds if and only if $y(x)=d(1-x)^{\alpha-1}$ with $d\in \R$ (see
\cite[Corollary~2.1]{kilbas}). Hence, $\bar{y}$ does not satisfy
equation ${_xD_1^\alpha}({^C_0D_x^\alpha}y)=0$.
The augmented function is
\begin{equation*}
F _{\lambda}\{\mathbf{y}\}^{\alpha,\beta}_\gamma(x)
=\bar{y}(x){^C_0D_x^\alpha}y(x)
-\lambda ({^C_0D_x^\alpha}y(x))^2,
\end{equation*}
and the corresponding fractional Euler--Lagrange equation gives
\begin{equation*}
{_xD_1^\alpha}(\bar{y}(x)-2\lambda{^C_0D_x^\alpha}y(x))=0.
\end{equation*}
A solution to this equation is $\lambda=\frac{1}{2}$ and
$y=\bar{y}$. Therefore, by Remark~\ref{pareto:nes}, $y=\bar{y}$ is a
candidate Pareto optimal solution to problem
\eqref{ex:1:pr}--\eqref{ex:1:pr:bc}.
\end{example}

\begin{example}
\label{ex:2}
Consider the following multiobjective fractional variational problem:
\begin{equation}
\label{ex:2:pr} \left(\mathcal{J}^1(y),\mathcal{J}^2(y)\right)
=\left(\int_0^1 \frac{1}{2}({^C_0D_x^\alpha}y(x)-f(x))^2 \, dx ,
\int_0^1 \frac{1}{2}({^C_0D_x^\alpha}y(x))^2  \, dx \right)\longrightarrow \min
\end{equation}
subject to
\begin{equation}
\label{ex:2:pr:bc}
y(0)=0,\quad y(1)=\chi,\quad \chi \in \R,
\end{equation}
where $f$ is a fixed function. In this case we have
$N=1$, $\gamma=1$, and $d=2$. By Theorem~\ref{pareto:suf}, Pareto
optimal solutions to problem \eqref{ex:2:pr}--\eqref{ex:2:pr:bc}
can be found by considering the family of problems
\begin{equation}
\label{ex:2:fp}
w\int_0^1 \frac{1}{2}({^C_0D_x^\alpha}y(x)-f(x))^2 \, dx
+ (1-w)\int_0^1 \frac{1}{2}{^C_0D_x^\alpha}y(x)  \, dx\longrightarrow \min
\end{equation}
subject to
\begin{equation}
\label{ex:2:fp:bc}
y(0)=0,\quad y(1)=\chi,\quad \chi \in \R,
\end{equation}
where $w\in[0,1]$. Let us now fix $w$. By Theorem~\ref{Theo E-L1},
a solution to problem \eqref{ex:2:fp}--\eqref{ex:2:fp:bc} satisfies
the fractional Euler--Lagrange equation
\begin{equation}
\label{el:ex2}
{_xD_1^\alpha}({^C_0D_x^\alpha}y(x)-wf(x))=0.
\end{equation}
Moreover, by Theorem~\ref{suff}, a solution to \eqref{el:ex2} is a
global minimizer to problem \eqref{ex:2:fp}--\eqref{ex:2:fp:bc}.
Therefore, solving equation \eqref{el:ex2} for $w\in[0,1]$, we are
able to obtain Pareto optimal solutions to problem
\eqref{ex:2:pr}--\eqref{ex:2:pr:bc}. In order to solve equation
\eqref{el:ex2}, firstly we use Corollary~2.1 of \cite{kilbas} to get the
following equation:
\begin{equation}
\label{1storder:FDELe}
{^C_0D_x^\alpha}y(x)-wf(x)=d(1-x)^{\alpha-1},\quad d\in \R.
\end{equation}
\begin{figure}
\centering
\includegraphics[scale=0.51]{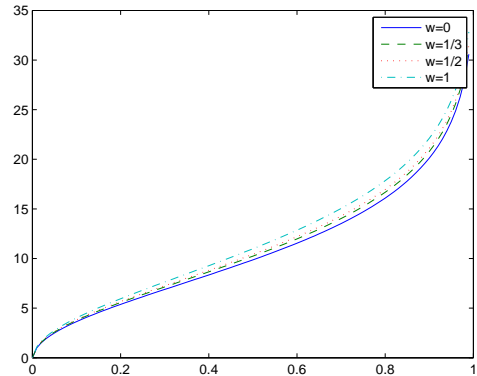}
\caption{\label{fig} The minimizer of Example~\ref{ex:2}
for $f(x) = \mathrm{e}^x$ and $\alpha = \frac{1}{2}$.}
\end{figure}
Equation~\eqref{1storder:FDELe} needs to be solved numerically. We did
numerical simulations using the MatLab solver \texttt{fode} for
linear Fractional-Order Differential Equations (FODE) with constant
coefficients, developed by Farshad Merrikh Bayat \cite{matlab}. The
results for $\alpha = 1/2$, $f(x) = \mathrm{e}^x$, and different values of
the parameter $w$ can be seen in Figure~\ref{fig}. Numerical results for
different values of $\alpha$ show that when $\alpha\rightarrow 1$
the fractional solution converges to the solution of the classical
problem of the calculus of variations.
\end{example}


\section*{Acknowledgments}

The authors would like to express their gratitude to Ivo Petras,
for having called their attention to \cite{matlab}
as well as for helpful discussions on numerical aspects and available
software packages for fractional differential equations.



\end{document}